\documentclass[11pt,a4paper]{article}

\usepackage{amsmath,amsthm,amsopn,amsfonts,graphicx,amssymb,dsfont,color}
\usepackage{mathrsfs}
\usepackage{color}
\usepackage{hyperref}
\usepackage{mathtools}
\usepackage{enumerate}
\usepackage{a4,a4wide}
\usepackage{soul}
\usepackage[affil-it]{authblk}
    


\newcommand{\NN}{\mathbb{N}}

\newcommand{\RR}{\mathbb{R}}


\newcommand{\dd}{\mathrm{d}}

\newcommand{\eps}{\varepsilon} 
\newcommand{\ep}{\varepsilon}


\newcommand{\correc}[1]{{\textcolor{red}{#1}}}

\newtheorem{theo}{Theorem}[section]

\newtheorem{lem}[theo]{Lemma}
\newtheorem{rema}[theo]{Remark}
\newtheorem{defi}[theo]{Definition}

\title{\bf Infinitely many saturated travelling waves for a degenerate Fisher-KPP equation not in divergence form}

\author{Matthieu Alfaro\thanks{\texttt{matthieu.alfaro@univ-rouen.fr}\\ Université de Rouen Normandie, CNRS, Laboratoire de Mathématiques Raphaël Salem (LMRS), Saint-Étienne-du-Rouvray, France}, Maxime Herda\thanks{\texttt{maxime.herda@inria.fr}\\ Univ. Lille, CNRS, Inria, UMR 8524 - Laboratoire Paul Painlevé, F-59000 Lille, France}, and Andrea Natale\thanks{\texttt{andrea.natale@inria.fr}\\ Univ. Lille, CNRS, Inria, UMR 8524 - Laboratoire Paul Painlevé, F-59000 Lille, France}}

\date{}

\begin{document}
\maketitle

\begin{abstract} We consider an epidemic model with distributed-contacts. When the contact kernel concentrates, one formally reaches a very degenerate Fisher-KPP equation with a diffusion term that is not in divergence form. We make an exhaustive study of its travelling waves. For every admissible speed, there exist not only a unique non-saturated (smooth) wave but also infinitely many saturated (sharp) ones. Furthermore their tails may differ from what is usually expected. These results are thus in sharp contrast with their counterparts on related models.\\

\noindent{\textsc{Keywords:} epidemic model, degenerate Fisher-KPP equation not in divergence form, infinitely many travelling waves, unusual tails.}\\

\noindent{\textsc{AMS Subject Classifications:} 35K65 (Degenerate parabolic equations), 35C07 (Traveling wave solutions), 92D30 (Epidemiology).}
\end{abstract}

\section{Introduction}\label{s:intro}

Starting from a $SI$ epidemic model with so-called distributed-contacts, we show the relevance of a very degenerate (since not in divergence form) reaction-diffusion equation of the Fisher-KPP type for the infectious density $I=I(t, x)$, namely
\begin{equation}
\label{edp}
\partial _t I=(1-I)\Delta I +I(1-I), \quad t>0,\, x \in \mathbb R^N,
\end{equation}
where $N\geq 1$. We focus on the existence and properties
of planar travelling waves for \eqref{edp}. We show that, for any admissible speed, there exist not only a unique non-saturated (smooth) wave but also infinitely many saturated ones. This is in sharp contrast with existing results on related models. The tails of these waves also exhibit unexpected behavior.

\medskip

Compartmental epidemics models were introduced in the seminal work  of Kermack and McKendrick \cite{Ker-Mac-27}. The simplest $SI$ model consists of a ODE system for $S=S(t)$, $I=I(t)$,  the number of Susceptible and  Infectious, uses a law of mass action and writes 
 \begin{eqnarray}\label{SIrecovery}
\begin{cases} \frac{dS}{dt}=-\beta SI &\quad t>0,  \vspace{5pt}\\
\frac{dI}{dt}=\beta SI-\mu I &\quad t>0,
\end{cases}
\end{eqnarray}
where $\beta>0$ is the constant transmission rate, $\mu \geq 0$ the constant recovery rate. However spatial effects, which have a very determining effect on the propagation of epidemics, are neglected in this simple model. To fill this gap, Kendall \cite{Ken-65} and Mollison \cite{Mol-72} have allowed (see also   \cite{Thi-77}, \cite{Die-78},  \cite{Med-Kot-03})  spatially distributed-contacts between individuals. In this framework, in absence of recovery ($\mu=0$) and letting $\beta=1$ up to changing the time scale,  $S=S(t,x)$, $I=I(t,x)$ solve the integro-differential system
 \begin{eqnarray}\label{integro-diff-SI}
\begin{cases} \partial _t S=- S \displaystyle \int_{\mathbb R^N} K(x,y)I(t,y)\,dy \quad & t>0,\, x\in \mathbb R^N, \vspace{5pt}\\
\partial _t I= S \displaystyle \int_{\mathbb R^N} K(x,y)I(t,y)\,dy \quad & t>0,\, x\in \mathbb R^N, 
\end{cases}
\end{eqnarray}
with $K(x,y)$ denoting the density function for the proportion of infectious at position $y$ that contact susceptibles at position $x$. Since $S+I$ is independent of time, say equal to $1$, we may rewrite 
\begin{equation}\label{eq-integro-diff}
\partial _t I=(1-I)(J*I-I)+ I(1-I), \quad t>0, \,  x\in \mathbb R ^N,
\end{equation}
where we have assumed  $K(x,y)=J(x-y)$ with $J$ a probability density on  $\mathbb R^N$, meaning that  contacts are {\it homogeneous} in space. However, in some situations, nonlocal diffusion operators can be approximated by local ones. This fact is well-known, see e.g. \cite[Chapter VI, subsection 6.4]{Bur-00-book}, and can be understood from a simple formal Taylor expansion. Indeed, assuming that $J$ is radial (note that weaker conditions such as component-wise symmetry would be enough) and has a finite second moment $J_2\coloneqq\int_{\mathbb R ^N}|z| ^2J(z)\,dz<+\infty$, replacing the kernel $J(z)$ by the focused kernel
$$
J_\ep(z)\coloneqq\frac{1}{\ep^{N}}J\left(\frac  z \ep\right), \quad 0<\ep\ll 1,
$$
we see that
\begin{eqnarray*}
(J_\ep *I-I)(t,x)&=&\int_{\mathbb R ^N}\frac{1}{\ep ^N}J\left(\frac{y-x}{\ep}\right)(I(t,y)-I(t,x))\,dy\\
&=&\int_{\mathbb R ^N}J(z)(I(t,x+\ep z)-I(t,x))\,dz\\
&\approx&  \ep ^2 \frac{J_2}{2N} \Delta I(t,x), \quad \text{ as } \ep \to 0.
\end{eqnarray*}
For a rigorous statement one may refer to \cite[Theorem 1.24]{And-10}. Let us also note that the case of nonlocal {\it heterogeneous} diffusion has recently been explored in \cite{Alf-Gil-Kim-Pel-Seo-22}.
Hence, starting from a $SI$ model with distributed-contacts, and formally passing to the focusing kernel limit, we have reached a degenerate Fisher-KPP equation of the form \eqref{edp}.

\medskip

Since the introduction of spatial effects in epidemics models, the issue of travelling wave solutions  has attracted much attention. We may refer, among many others, to the works \cite{Atk-Reu-76}, \cite{Bro-Car-77}, \cite{Bar-77},  \cite{Aro-77}, \cite{Thi-77}, \cite{Die-78}, or \cite{Hos-Ily-95}. Very recently, a renewed interest for these models, both from the modelling and the mathematical analysis point of view, has emerged. Let us mention for instance the focus on heterogeneities \cite{Duc-Gil-14}, \cite{Duc-20}, \cite{Duc-22}, the effect of  fast lines of diffusion \cite{Ber-Roq-Ros-21}, the spread of epidemics on graphs \cite{Bes-Fay-21}, the spread of several variants of a disease \cite{Duc-Nor-23}, \cite{Bur-Duc-Gri-23}, \cite{Fay-Roq-Zha-23}, etc.

\medskip

Planar travelling wave solutions are  particular solutions (of reaction-diffusion equations) describing the transition at a constant speed $c$ from one stationary solution to another one. In the case of the classical Fisher-KPP equation $\partial _t I=\Delta I +I(1-I)$,
it has long been known \cite{Fis-37}, \cite{Kol-Pet-Pis-37},   \cite{Aro-Wei-78}, that they exist if and only if $c\geq c^*\coloneqq2$.  Moreover, they are very accurate to describe the long time behavior of the Cauchy problem, the tails of the initial data selecting the speed $c\geq c^*$. In particular, the minimal speed $c^*=2$ corresponds to the
so-called {\it spreading speed} of propagation for the Cauchy problem with compactly supported data. In the Fisher-KPP framework, let us also mention the construction of travelling wave solutions in presence of density-dependent diffusion \cite{Eng-85}, possibly degenerate \cite{San-Mai-94, San-Mai-95}, \cite{Mal-Mar-03},  \cite{Dra-Tak-21}. However, much less is known when the equation is, as \eqref{edp}, not in divergence form.  We  mention the works \cite{Wan-Yin-03}, \cite{Yin-Jin-09, Yin-Jin-09-bis}, mainly concerned with diffusion of the form $I^m\Delta I$ (or variants involving the $p$-Laplacian), meaning that singularities are caused by $I\equiv 0$ (the unstable equilibrium of the logistic equation), which is in sharp contrast with \eqref{edp} where singularities are caused by $I\equiv 1$ (the stable equilibrium of the logistic equation). 

\medskip

In this work, we are thus concerned with the existence of travelling wave solutions for \eqref{edp}. For any admissible speed $c\geq c^*\coloneqq2$, we will show that there is a travelling wave solution with value in $(0,1)$, which is quite expected. However, the equation being not in divergence form, we will also show that, for any admissible speed $c\geq c^*\coloneqq2$, there are infinitely many travelling waves saturating at value 1 on some semi-infinite interval. This phenomenon is quite unusual and raises many questions regarding the Cauchy problem, in particular concerning its well-posedness and long time behavior.

\section{Main results}\label{s:results}

Let us propose a notion of weak solution for equation \eqref{edp}. First, we impose the bound $I\leq 1$ for the equation not to become anti-diffusive.
Next, we rewrite the diffusion term $(1-I)\Delta I$ as  $\nabla\cdot((1-I)\nabla I) + |\nabla I|^2$ and, classically, multiply by test functions and integrate by parts, see  \cite{Ber-Dal-Ugh-90} for a similar formulation.

\begin{defi}[Weak solution]\label{defi:weak}
A weak solution of \eqref{edp} is a function $I\in L^\infty(\mathbb{R}\times\mathbb{R}^N)\cap L^2_\text{loc}(\mathbb{R}, H^1_\text{loc}(\mathbb{R}^N))$ such that $I\leq 1$ and
\begin{equation}\label{weak-formu}
\int_{\mathbb{R}\times\mathbb{R}^N} \left(I\partial_t\varphi - (1-I)\nabla I\cdot\nabla\varphi +|\nabla I|^2\varphi + I(1-I)\varphi\right)(t,x) \,dt dx = 0,
\end{equation}
 for all compactly supported $\varphi\in\mathcal{C}^{1}(\mathbb{R}\times\mathbb{R}^N)$.
\end{defi}

As far as the Cauchy problem is concerned, the well-posedness (existence and uniqueness) is a delicate issue because of the non divergence form of the equation, see \cite{All-83}, \cite{Ugh-84}, \cite{Dal-Luc-87}, \cite{Ber-Ugh-90}, \cite{Ber-Dal-Ugh-90}. In \cite{Ugh-84} Ughi, in the one dimensional case, has worked in the framework of solutions that are {\it Lipschitz} in space. Using a formulation in the spirit of Definition \ref{defi:weak}, she proves existence by some approximation/regularization procedure, but highlights a non uniqueness result. Next, she proposes another weak formulation based on
$$
\partial _t (\log (1-I))=\Delta (1-I)-I,
$$
obtained by first dividing the equation by $1-I$. It turns out that this improved formulation ensures uniqueness. However, the support of $1-I$ does not depend on time. It is challenging to define a notion of weak solution for the Cauchy problem not only ensuring uniqueness but also allowing this support to move, hence capturing the saturated fronts constructed in the present paper. We underline that these saturated fronts are not Lipschitz in space and this fact should play a key role. We hope to address this issue in a future work.

 Now we define the notion of a planar travelling wave for \eqref{edp} (see Remark \ref{rem:la-def-est-bonne}). 
 
\begin{defi}[Travelling wave profile]\label{defi:profile}
A travelling wave profile is a couple $(z^*,u)$ made of a $-\infty\leq z^*<+\infty$ and a function $u:(z^*,+\infty)\to(0,1)$ of class $C^2$ on $(z^*,+\infty)$, such that $u(z^*)=1$, $u(+\infty)=0$.
\begin{enumerate}
    \item [(i)] If $z^*=-\infty$, the wave is said to be {\it \bf non-saturated}.
    \item [(ii)] If $z^*\neq - \infty$, the wave is said to be {\it \bf saturated}.
\end{enumerate}
\end{defi}

\begin{defi}[Travelling wave solution to \eqref{edp}]\label{defi:travwave}
A travelling wave solution to \eqref{edp} propagating in the direction $e\in \mathbb S^{N-1}$ is a triplet $(c,z^*,u)$, made of a speed $c\in \RR$ and a travelling wave profile $(z^*,u)$, such that 
\begin{equation}\label{eq:correspIu}
I(t,x) \coloneqq \left\{\begin{array}{ll}u(x\cdot e-ct)& \text{if}\  x\cdot e-ct > z^*,\\1&\text{if}\ x\cdot e-ct \leq z^*\end{array}\right.
\end{equation}
is a weak solution to \eqref{edp} in the sense of Definition~\ref{defi:weak}.
\end{defi}

Our main result is the following exhaustive description of travelling waves.

\begin{theo}[Travelling waves]\label{th:tw} There is no travelling wave solution to \eqref{edp} with $c<2$. Next, let $c\geq 2$ be fixed. Then there exist a non-saturated and infinitely many saturated waves normalized by $u(0)=\frac 12$. More precisely, defining
$$
 \lambda^-\coloneqq\frac{c-\sqrt{c^2-4}}{2}, \quad  \lambda^+\coloneqq\frac{c+\sqrt{c^2-4}}{2},
$$
there are constants $C_i>0$ such that for any $0<\ep\ll 1$ the following hold.
\begin{enumerate}
\item [(i)] There exists a unique non-saturated wave whose profile is denoted \correc{by} $u_{NS}$. It satisfies
\begin{equation}
\label{decay-0-non-saturated}
C_1\,e^{-\lambda^-z}\leq u_{NS}(z)\leq  C_2\,e^{-(\lambda^--\ep) z}, \quad \text{ as } z\to +\infty, 
\end{equation}
\begin{equation}
\label{decay-1-non-saturated}
C_3\,e^{\frac 1 c z}\leq 1- u_{NS}(z)\leq C_4\,e^{\frac 1 {c+\ep} z}, \quad \text{ as } z\to-\infty.
\end{equation}
Last, there is $z_0>0$ such that $u''<0$ on $(-\infty,z_0)$ while $u''>0$ on $( z_0,+\infty)$.
\item [(ii)] There exist infinitely many saturated waves whose profiles are denoted by $u_S$. They satisfy
\begin{equation}
\label{behavior-1-saturated}
1-u_S(z) \sim -c(z-z^*)\log (z-z^*), \quad \text{ as } z\searrow z^*.
\end{equation}
Among them
\begin{enumerate}
    \item [(a)] there are infinitely many ones for which there are $z_1<0<z_2$ such that $u_S''>0$ on $(z^*,z_1)$, $u_S''<0$ on $(z_1,z_2)$, $u_S''>0$ on $(z_2,+\infty)$. They satisfy
        \begin{equation}
        \label{decay-0-saturated}
        C_5\,e^{-\lambda^-z}\leq u_{S}(z)\leq  C_6\,e^{-(\lambda^--\ep) z}, \quad \text{ as } z\to +\infty.
        \end{equation}
    \item [(b)] there is one such that $u_S''>0$ on $(z^*,0)\cup(0,+\infty)$ and $u_S''(0)=0$. It  satisfies \eqref{decay-0-saturated}.
    \item [(c)] there are infinitely many ones such that $u_S''>0$ on $(z^*,+\infty)$.  
     Infinitely many of them satisfy
     \begin{equation}
     \label{decay-0-saturated-heavy}
         C_7\,e^{-(\lambda^-+\ep)z} \leq u_{S}(z)\leq  C_{8}\, e^{-(\lambda^- -\ep)z}, \quad \text{ as } z\to +\infty.
    \end{equation}
     At least one other satisfies 
      \begin{equation}
\label{decay-0-saturated-light}
C_9\,e^{-(\lambda ^++\ep) z}\leq u_{S}(z)\leq  C_{10}\,e^{-\lambda ^+ z}, \quad \text{ as } z\to +\infty.
\end{equation} 
\end{enumerate}
\end{enumerate}
\end{theo}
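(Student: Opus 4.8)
The plan is to reduce the whole statement to the analysis of a planar autonomous ODE. Inserting \eqref{eq:correspIu} into \eqref{edp} on the region $\{x\cdot e-ct>z^*\}$ shows that a travelling wave profile must solve
\[
(1-u)u''+cu'+u(1-u)=0\qquad\text{on }(z^*,+\infty),
\]
together with $u(+\infty)=0$ and $u\to1$ at the left endpoint; conversely, undoing the integrations by parts in \eqref{weak-formu} shows that the extension \eqref{eq:correspIu} of such a profile is a weak solution in the sense of Definition~\ref{defi:weak} precisely when the degenerate flux $(1-u)u'$ tends to $0$ and $u'\in L^2_{\mathrm{loc}}$ near $z^*$, so that the boundary terms at $z^*$ cancel (for the non-saturated waves there is no such term and the profile is a classical solution on all of $\RR$). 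The diffusion degenerates only at $u=1$, hence the ODE is singular only at the left endpoint, and I would tame this by the desingularising time change $\mathrm{d}z=(1-u)\,\mathrm{d}\tau$, which turns the first order system $u'=p$, $(1-u)p'=-cp-u(1-u)$ into the smooth system $\dot u=(1-u)p$, $\dot p=-cp-u(1-u)$, to be studied in the strip $\{0\le u\le1,\ p\le0\}$.

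This system has exactly two equilibria in the strip. At $(0,0)$ the linearisation has eigenvalues $-\lambda^{\pm}$, hence $(0,0)$ is a stable node when $c\ge2$ and a stable focus when $c<2$; the latter already gives the non-existence statement, since a profile with $u(+\infty)=0$ would have to spiral into $(0,0)$, hence take negative values --- impossible. At $(1,0)$ the linearisation has eigenvalues $0$ and $-c$: the invariant line $\{u=1\}$ is its stable manifold, and its one-dimensional center manifold is tangent to $\{p=-(1-u)/c\}$ with the flow leaving $\{u=1\}$ along it. A simple trapping argument (on $\{p=0,\ 0<u<1\}$ the field is $(0,-u(1-u))$, strictly downward) shows that every orbit of interest stays in $\{p<0\}$, so $u$ decreases strictly along it and, not blowing up while $u\in(0,1)$, it runs monotonically in $u$ from $\{u=1\}$ down to $(0,0)$. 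The unique non-saturated wave is then the orbit leaving $(1,0)$ along its center manifold --- the only orbit in the strip with $\alpha$-limit $(1,0)$; a short expansion at $(1,0)$ gives $1-u\sim A\,e^{z/c}$ with $z\to-\infty$ and some $A>0$, hence \eqref{decay-1-non-saturated}.

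The saturated waves come from the orbits emanating from the point ``$u=1$, $p=-\infty$'', which is at infinity for the rescaled system; a boundary-layer analysis there --- equivalently, writing $p=c\log(1-u)+q(u)$ and solving the contractive relation $q'=-u/(c\log(1-u)+q)$ as $u\to1^-$ --- exhibits a genuine one-parameter family of such orbits, indexed by $K:=\lim_{u\to1^-}q(u)\in\RR$ and not related by translation. Each such orbit neither blows up nor meets $\{p=0\}$ at an interior point, so it runs all the way to $(0,0)$: this yields infinitely many saturated waves. Integrating $p\sim c\log(1-u)$ along one of them gives $z^*$ finite and $1-u\sim-c(z-z^*)\log(z-z^*)$, i.e.\ \eqref{behavior-1-saturated}, whence $(1-u)u'\sim c(1-u)\log(1-u)\to0$ and $u'\sim c\log(z-z^*)\in L^2_{\mathrm{loc}}$ near $z^*$, so the associated $I$ is indeed a weak solution. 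The convexity assertions follow from $\operatorname{sign}u''=\operatorname{sign}\bigl(-cu'-u(1-u)\bigr)$, i.e.\ $u''>0$ below the nullcline $\Gamma:=\{p=-u(1-u)/c\}$ and $u''<0$ above it: every wave lies below $\Gamma$ near both ends (near $u=0$ because $-\lambda^{\pm}<-1/c$, near $u=1$ on a saturated orbit because $p\to-\infty$), hence is convex there, while a second order expansion at $(1,0)$ shows the non-saturated orbit lies just above $\Gamma$ near $z=-\infty$, producing the single change of convexity at $z_0$; counting the intersections of the orbit with $\Gamma$ as $K$ runs over $\RR$ then gives, by a continuity argument along the family, the three regimes (a) two crossings (pattern $+,-,+$), (b) one tangential contact, (c) no crossing. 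Finally, the tails at $+\infty$ are read off from the approach to the node: generically along the slow eigendirection, giving $u\sim Ce^{-\lambda^-z}$ (so \eqref{decay-0-non-saturated}, \eqref{decay-0-saturated}, \eqref{decay-0-saturated-heavy}), while the single orbit on the strong stable manifold of $(0,0)$, traced backwards, reaches $u=1$ with infinite slope --- one must check it is not the center manifold orbit --- so it is one of the convex saturated waves and carries the light tail \eqref{decay-0-saturated-light}. The $\pm\ep$ losses in all these exponents are what one gets from comparison with exponential sub- and super-solutions built on $\lambda^{\pm}$.

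The main obstacle is the analysis at the singular endpoint $u=1$ for the saturated waves. Since the equation is simultaneously degenerate and not in divergence form there, standard ODE theory does not apply and one must: (i) make sense of, and construct, the orbits reaching $u=1$ with $u'=-\infty$; (ii) prove they form a genuine one-parameter family with the \emph{sharp} asymptotics $1-u\sim-c(z-z^*)\log(z-z^*)$ --- in particular that no profile with finite left slope can attain the value $1$, which would contradict the equation, so that saturated fronts are necessarily non-Lipschitz; and (iii) check that exactly this asymptotics cancels the flux boundary term in \eqref{weak-formu}, making the extension by $1$ a bona fide weak solution. The continuous dependence of this family on $K$ up to the singular limit, needed for the trichotomy (a)--(c) and to locate the light-tailed wave, is a further delicate point.
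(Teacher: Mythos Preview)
Your approach is a valid and genuinely different route from the paper's. The paper does \emph{not} desingularise via $\mathrm{d}z=(1-u)\,\mathrm{d}\tau$ and never invokes center/stable manifold theory. Instead it sets $h(r)\coloneqq (u')^2\circ u^{-1}(r)$ and reduces everything to the \emph{scalar, non-autonomous, singular} Cauchy problem
\[
h'(r)=\frac{2c}{1-r}\sqrt{h^+}-2r,\qquad h(0)=0,\quad 0<r<1,
\]
which is then analysed by explicit sub/supersolutions (the bell $B(r)=r^2(1-r)^2/c^2$, the curves $\lambda^2 r^2(1-r)^2$ for $\lambda\in\{\lambda^-,\lambda^+\}$, and $c^2(\log(1-r))^2$) together with a shooting argument from $r=\tfrac12$ with parameter $\alpha=h(\tfrac12)$. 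Your nullcline $\Gamma=\{p=-u(1-u)/c\}$ is precisely $\{h=B\}$, so the two pictures are the same phase plane in different coordinates; the real difference is methodological.

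What each buys. Your dynamical-systems viewpoint makes the $c<2$ non-existence instantaneous (stable focus at $(0,0)$), whereas the paper runs a bootstrap on $\sqrt{h(r)}/r$ to kill positive solutions. It also makes the light tail \eqref{decay-0-saturated-light} conceptually clear as the strong stable manifold at the node. Conversely, the paper's shooting parameter $\alpha$ turns several of your soft steps into one-line computations: the trichotomy (a)--(b)--(c) in Theorem~\ref{th:tw}(ii) is literally the comparison of $\alpha$ with the top of the bell $B(\tfrac12)=\tfrac{1}{16c^2}$, and the ``infinitely many'' claims become open intervals of $\alpha$. The paper's explicit supersolution $c^2(\log(1-r))^2$ gives the sharp asymptotics \eqref{behavior-1-saturated} with almost no work, and its squeeze $(\lambda^+)^2r^2(1-r)^2<H(r)\le c^2(\log(1-r))^2$ immediately certifies that the $\lambda^+$-tailed solution is saturated (your ``one must check it is not the center manifold orbit''). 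The delicate parts you flag at the end---rigorous parametrisation of the saturated family near $u=1$, continuity in that parameter, and the fine splitting of the convex waves into $\lambda^-$-type and $\lambda^+$-type---are exactly where the paper's concrete sub/supersolutions (including the non-obvious one $(\lambda^-)^2r^2(1+\alpha r^\beta)$ from Lemma~\ref{lem:sub-super}(d)) do the heavy lifting that your sketch leaves open.
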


Observe first that the set of admissible speeds, namely $[2,+\infty)$, is the same as that for the classical Fisher-KPP travelling waves. However, the above results are in very sharp contrast with previous results on related problems. 

Indeed, and first of all,  for a given admissible speed $c\geq 2$,  {\it two} different kinds of waves, non-saturated and saturated, co-exist. This is already very unusual and, of the top of it, the saturated waves are {\it infinitely many}. 

Also, from our construction, any two of the waves of Theorem \ref{th:tw} cross each other exactly once, namely at $z=0$. Furthermore, see the shooting argument with $\alpha$ in \eqref{cauchy-un-demi}, 
$$
u_{NS} > u_{S} \text{ of type $(a)$}>u_{S} \text{ of type $(b)$}>u_{S} \text{ of type $(c)$}\quad \text{  on } (0,+\infty), 
$$
and reversed order on $(-\infty,0)$. Also from the shooting argument we use in Section \ref{s:tw}, the rightmost points $z^*$ where the saturated waves are equal to 1 span $(-\infty,z^*_{max}]$ for some $z^*_{max}<0$, see Figure \ref{fig:waves} for a picture of the situation.

\begin{figure}[ht]
\begin{center}
  \includegraphics[scale=.75]{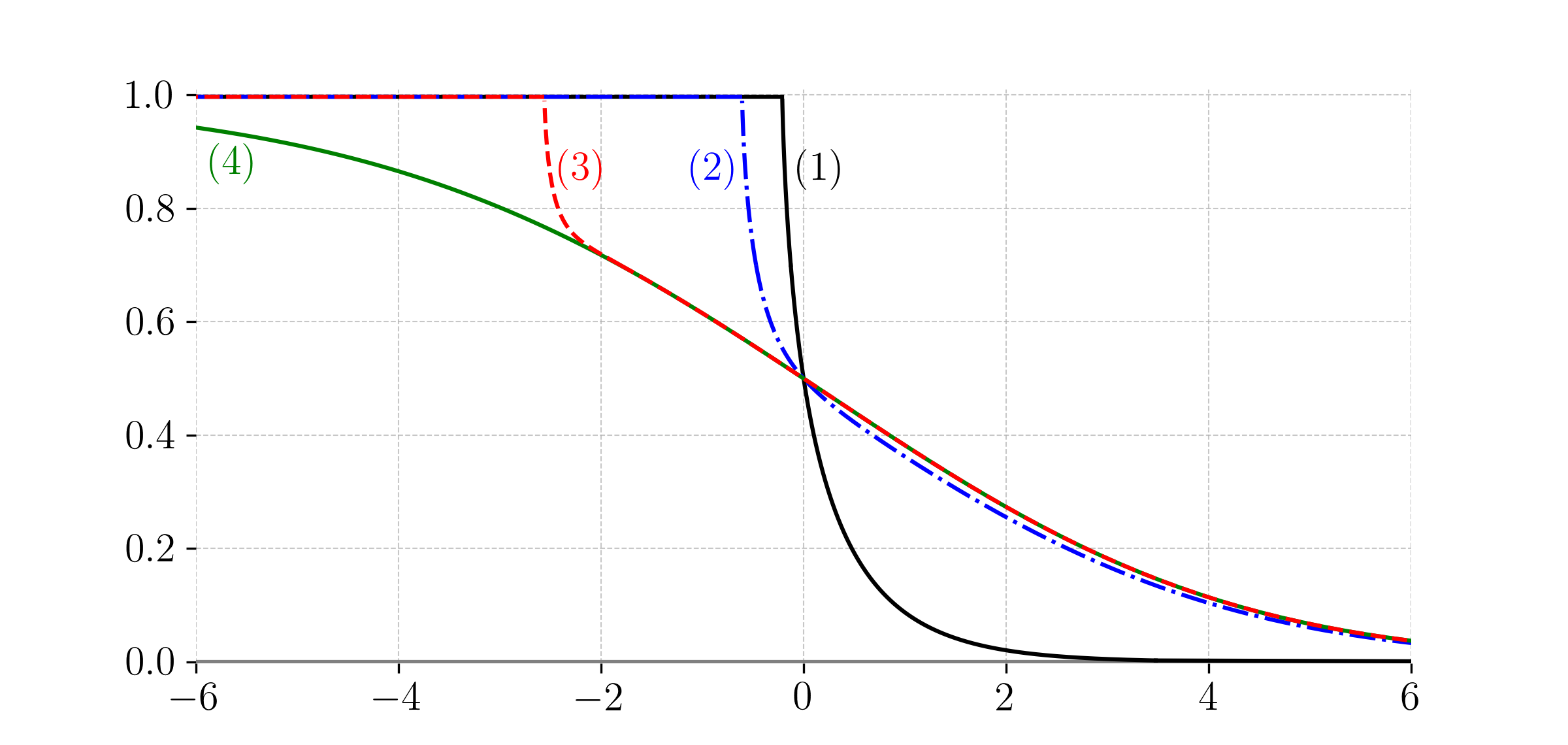}
    \caption{Some travelling waves of Theorem \ref{th:tw} (for a given speed $c=2.1$). The profile (1), (2) and (3) are saturated waves. Specifically, (1) and (2) are profiles of type (c) (convex), whereas (3) is a profile of type (a) (convex-concave-convex). Profile (4) corresponds to the unique non-saturated wave.}
   \label{fig:waves}
   \end{center}
\end{figure}

Another striking phenomenon concerns the {\it right tails} of these waves. Indeed, when $c>2$ (supercritical speeds), \eqref{decay-0-non-saturated} and \eqref{decay-0-saturated-light} indicate that the decay towards zero of {\it some} $u_{NS}$ and $u_S$ are not the same: since then $\lambda^-<\lambda^+$, we even have
\begin{equation}\label{truc-de-ouf}
\text{ if }\; c>2\; \text{ then }\; u_S(z)\ll u_{NS}(z) \text{ as } z\to +\infty.
\end{equation}
Let us recall that if we linearize at $z=+\infty$ the travelling wave equation for the  classical Fisher-KPP equation, namely $u''+cu'+u(1-u)=0$, we get $u''+cu'+u=0$, which obviously admits two exponential solutions $e^{-\lambda^-z}$ and $e^{-\lambda^+z}$, connecting $+\infty$ as $z\to-\infty$ to 0 as $z\to+\infty$. However, when considering the full nonlinear equation with $u(-\infty)=1$, the right tail is of the magnitude $e^{-\lambda^-z}$, namely
$$
u_{classical}(z)\sim \begin{cases}
C e^{-\lambda^-z}  & \text{ if } c>2,\\
Cz e^{-z} & \text{ if }c=2,
\end{cases}, \quad \text{ as } z\to +\infty.
$$
In other words, the behavior $e^{-\lambda^+z}$ is erased by the \emph{a priori} condition $u' = 0$ at $u=1$ in the classical Fisher-KPP equation but can still persist in the degenerate case under consideration, see \eqref{decay-0-saturated-light}. Hence, while the tail behaviors \eqref{decay-0-saturated} and \eqref{decay-0-saturated-heavy} appear comparable to those in the classical Fisher-KPP situation, the tail behavior \eqref{decay-0-saturated-light} is in sharp contrast with the classical case.

Note that \eqref{truc-de-ouf} is true for the $u_{S}$ satisfying \eqref{decay-0-saturated-light} (at least one), but this may still hold for any of the constructed $u_{S}$'s. Understanding this would require very refined asymptotics of $h(r)$ as $r\to 0$ (see Section \ref{s:tw} for the definition of $h$ and details)
 and is left as an open question.
 
Similarly, determining if there is {\it exactly one} or {\it infinitely many} waves satisfying \eqref{decay-0-saturated-light} is far from obvious and would require  refined estimates of $h(r)$ as $r\to 0$ (see Section \ref{s:tw} for details).

The {\it left tail} of the non-saturated wave $u_{NS}$ is also unusual. Indeed, in the  classical Fisher-KPP equation, as seen from linearizing at $z=-\infty$ and as well-known,
$$
1-u_{classical}(z) \sim Ce^{\frac{-c+\sqrt{c^2+4}}{2}z}, \quad  \text{ as } z\to -\infty.
$$
However, since $\frac 1 c>\frac{-c+\sqrt{c^2+4}}{2}$, \eqref{decay-1-non-saturated} indicates that the left tail of the non-saturated wave is much steeper, namely
$$
1-u_{NS}(z)\ll 1-u_{classical}(z), \quad \text{ as } z\to -\infty.
$$
The reason for \eqref{decay-1-non-saturated} can be understood as follows: as $z\to -\infty$, meaning $u\to 1$, the degenerate travelling wave equation $(1-u)u''+cu'+u(1-u)=0$ is, at least formally, approximated by the first order ODE $cu'+(1-u)=0$ so that  $1-u(z)$ decays like $e^{\frac 1 c z}$.

Obviously, these various phenomena are induced by the non-divergence form of the equation and raise further questions not only on the well-posedness (as mentioned above)  but also on the long time behavior of the Cauchy problem associated with \eqref{edp}. Indeed, the exponential right tail of a front-like initial datum may not decide alone the spreading speed, as in the classical case. We believe that if $0<u_0<1$ on $\RR$ then the spreading speed is the wave speed of $u_{NS}$ having the right tail \lq\lq matching'' with that of $u_0$. However, if $u_0\equiv 1$ on some interval,  the picture is less clear. This would deserve further investigations. 

\medskip

The rest of the paper is devoted to the proof of Theorem \ref{th:tw}.  The strategy consists in transforming in subsection \ref{ss:reaching} the problem into a first order singular ODE Cauchy problem, studied in details in subsections \ref{ss:Cauchy} and \ref{ss:boundary}. Last, we conclude in subsection \ref{ss:back}.

\section{Travelling wave solutions}\label{s:tw}

By formally plugging the ansatz $I(t,x) = u(x\cdot e-ct)$ into \eqref{edp}, it is clear that the profile $(z^*,c,u)$ should satisfy the equation
\begin{equation}
    \label{eq-tw}
    (1-u)u''+cu'+u(1-u)=0, \quad \text{ on }\, (z^*,+\infty).
\end{equation}

We start with the following basic facts about \eqref{eq-tw}.

\begin{lem}[A priori facts]\label{lem:a-priori} Let $(z^*,u)$ be a travelling wave profile in the sense of Definition~\ref{defi:profile} which, for some $c\in \RR$, satisfies \eqref{eq-tw}.  Then
\begin{enumerate}
    \item [(i)] $u'<0$ on $(z^*,+\infty)$.
    \item [(ii)] $u'(+\infty)=0$, $(1-u(z))u'(z)\to 0$ as $z\to z^*$, and $u'\in L^2(z^*,+\infty)$.
    \item [(iii)] $c>0$.
\end{enumerate}
\end{lem}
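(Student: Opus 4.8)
The plan is to argue directly from the profile equation \eqref{eq-tw}, using everywhere that $0<u<1$ on $(z^*,+\infty)$, so that $1-u>0$ there and \eqref{eq-tw} may be divided by $1-u$ to give $u''=-\tfrac{c}{1-u}u'-u$. \emph{For (i)}, I would first observe that at any $z_0\in(z^*,+\infty)$ with $u'(z_0)=0$, equation \eqref{eq-tw} forces $u''(z_0)=-u(z_0)<0$, so every critical point of $u$ is a strict local maximum. A $C^2$ function cannot have two strict local maxima without an interior local minimum between them, and such a point would be a further critical point with $u''\ge 0$, which is impossible; hence $u$ has at most one critical point. If it had exactly one, say $z_0$, then $u'$ would have no zero and hence constant sign on $(z^*,z_0)$, necessarily positive since $z_0$ is a local maximum, so $u$ would be strictly increasing on $(z^*,z_0)$ — absurd, since $u\to1$ as $z\to z^*$ while $u<1$ throughout. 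Therefore $u'$ never vanishes, has constant sign, and since $u$ runs from $1$ at $z^*$ to $0$ at $+\infty$, that sign is negative.

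\emph{For (iii)}, suppose $c\le 0$. By (i) we have $-\tfrac{c}{1-u}u'\le 0$, hence $u''=-\tfrac{c}{1-u}u'-u<0$ on all of $(z^*,+\infty)$, i.e. $u$ is strictly concave; but then $u$ lies below its tangent at any point $z_1$, so $u(z)<u(z_1)+u'(z_1)(z-z_1)\to-\infty$ as $z\to+\infty$ (since $u'(z_1)<0$), contradicting $u>0$. Hence $c>0$. \emph{For (ii)}, with $c>0$ in hand, dividing \eqref{eq-tw} by $1-u$ and multiplying by $u'$ yields the dissipation identity
\[
\frac{\dd}{\dd z}\Big(\tfrac12(u')^2+\tfrac12u^2\Big)=-\frac{c\,(u')^2}{1-u}\le 0,
\]
so $E:=\tfrac12\big((u')^2+u^2\big)$ is nonincreasing. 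It then has a nonnegative limit at $+\infty$; since $u\to0$ there, $(u')^2$ has a limit, and were it positive, $u'$ would converge to a negative constant and $u$ to $-\infty$, which is impossible. Hence $u'(+\infty)=0$. Moreover, since $1-u\le 1$, integrating the identity gives $\int_{z_1}^{+\infty}(u')^2\le\int_{z_1}^{+\infty}\tfrac{(u')^2}{1-u}=\tfrac1cE(z_1)<\infty$ for any fixed $z_1\in(z^*,+\infty)$, the $L^2$ bound near $+\infty$.

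It remains to analyse $z\to z^*$. In the non-saturated case ($z^*=-\infty$), $E$ has a limit $E_{-\infty}\in[\tfrac12,+\infty]$ at $-\infty$; if it were $+\infty$ then $(u')^2\to+\infty$, so $u'\to-\infty$ and $u\to+\infty$ as $z\to-\infty$, which is impossible; thus $E_{-\infty}<\infty$, $u'$ is bounded near $-\infty$ (whence $(1-u)u'\to0$), and the same integration as above gives $u'\in L^2(-\infty,z_1)$. In the saturated case ($z^*$ finite), I would instead work with $p:=(1-u)u'<0$, which by \eqref{eq-tw} satisfies $p'=-(u')^2-cu'-u(1-u)$; integrating from $z$ to $z_1$,
\[
p(z)=p(z_1)+\int_z^{z_1}(u')^2+c\big(u(z_1)-u(z)\big)+\int_z^{z_1}u(1-u),
\]
where, as $z\to z^*$, the last two terms converge (finite interval, bounded integrands, $u\to1$) while $\int_z^{z_1}(u')^2$ increases to some $M\in(0,+\infty]$. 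Since $p<0$ forces $M<\infty$, we get $u'\in L^2(z^*,z_1)$ and $p(z)\to\ell$ for some $\ell\le 0$. If $\ell<0$, then $\tfrac{\dd}{\dd z}(1-u)^2=-2p$ is bounded near $z^*$ while $(1-u)^2\to0$ there, so $(1-u(z))^2$ is at most a fixed multiple of $z-z^*$ near $z^*$; hence $(u')^2=p^2/(1-u)^2$ is at least a positive multiple of $(z-z^*)^{-1}$ near $z^*$, which is not integrable, contradicting $u'\in L^2(z^*,z_1)$. Therefore $\ell=0$, i.e. $(1-u)u'\to0$ as $z\to z^*$, and combined with the estimate near $+\infty$ this gives $u'\in L^2(z^*,+\infty)$.

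The routine parts are (i), (iii) and the behaviour at $+\infty$; \emph{the main obstacle} is the saturated case as $z\searrow z^*$, where $u'$ blows up and one must exclude the borderline square-root behaviour $1-u\sim C\sqrt{z-z^*}$. This is precisely the point where the weighted quantity $p=(1-u)u'$ and the $L^2$-integrability of $u'$ near $z^*$ (itself a by-product of the sign of $p$) have to be used together.
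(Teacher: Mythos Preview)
Your argument is correct. Part~(i) matches the paper's; your part~(ii) uses the same integrated identity for $p=(1-u)u'$ as the paper's equation~\eqref{eq-integree}, so the overall architecture is close. There are however two genuine differences worth noting. First, you prove (iii) \emph{before} (ii) by a direct concavity argument (if $c\le 0$ then $u''<0$, so $u$ lies below any tangent and becomes negative), whereas the paper obtains (iii) \emph{after} (ii) from the global identity $c=\int_{z^*}^{+\infty}\big((u')^2+u(1-u)\big)$; your order lets you use $c>0$ in the energy $E=\tfrac12\big((u')^2+u^2\big)$, which gives a clean treatment of the $+\infty$ side. Second, in the saturated case you rule out $\ell<0$ by showing $(1-u)^2\lesssim z-z^*$ and hence $(u')^2\gtrsim (z-z^*)^{-1}$, contradicting the $L^2$ bound you just derived; the paper instead bootstraps asymptotics through the ODE ($1-u\sim C\sqrt{z-z^*}$, then $u'\sim C'/\sqrt{z-z^*}$, then $u''\sim \tfrac{c}{2}(z-z^*)^{-1}$, then $u'\sim \tfrac{c}{2}\log(z-z^*)$) to reach a contradiction between two incompatible equivalents. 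Your contradiction is arguably more robust since it only uses first-order information, while the paper's gives a bit more insight into the precise failure mode; either route is fine here.
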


\begin{proof} Since $u$ takes value in $(0,1)$, it follows from  equation \eqref{eq-tw} that any critical point $z$ is such that $u''(z)<0$. Hence if there is $z_0\in(z^*,+\infty)$ such that $u'(z_0)=0$ the boundary condition $u(z^*)=1$ cannot be satisfied. This proves $(i)$.

Since $(1-u)u''=((1-u)u')'+u'^2$, integrating the equation over $(z,t)$ yields
\begin{equation}\label{eq-integree}
(1-u(z))u'(z)=(1-u(t))u'(t)+c(u(t)-u(z))+\int_z^t(u'^2+u(1-u))(s)\,ds.
\end{equation}
Since $u'^2+u(1-u)>0$, the right hand side admits a limit $\ell\in \RR \cup\{+\infty\}$ as $z\to z^*$, and so does $(1-u(z))u'(z)$. From $(i)$, $\ell \in(-\infty,0]$. Assume by contradiction  $\ell < 0$ so that 
\begin{equation}
\label{equiv-faux}
((1-u)^2)'(z)\sim -2\ell, \quad\text{  as }z\to z^*. 
\end{equation}
 If $z^*=-\infty$, it immediately follows, by integration of \eqref{equiv-faux}, that $(1-u)^2(z)$ is unbounded as $z\to-\infty$, which is absurd. Let us now consider the case $-\infty<z^*$. In this case, by integration of \eqref{equiv-faux}, we reach, as $z\to z^*$, $(1-u)(z)\sim \sqrt{-2\ell}\sqrt{z-z^*}$ which, in turn, provides  $u'(z)\sim\frac{\ell}{\sqrt{-2\ell}}\frac{1}{\sqrt{z-z^*}}$. Plugging these two informations into the travelling wave equation \eqref{eq-tw}, we then obtain, as $z\to z^*$,  $u''(z)\sim \frac{c}{2(z-z^*)}$ (note that the case $c=0$ is obviously excluded by the equation) which, after integration, provides $u'(z)\sim \frac{c}{2}\log (z-z^*)$, which contradicts a previous equivalent. Hence, in any case, $\ell=0$ that is  $(1-u(z))u'(z)\to 0$ as $z\to z^*$. The limit $u'(+\infty)=0$ may be obtained by similar arguments, or also via classical elliptic estimates.

As a result, we may let $z\to z^*$ and $t\to +\infty$ in \eqref{eq-integree} to reach
$$
c=\int_{z^*}^{+\infty}(u'^2+u(1-u))(s)\,ds,
$$
which concludes the proof of $(ii)$ and $(iii)$.
\end{proof}

\begin{rema}\label{rem:la-def-est-bonne}The argument to prove $(i)$ in Lemma \ref{lem:a-priori} actually excludes the possibility of travelling waves not having the form prescribed by Definition \ref{defi:profile}. For instance if $u\equiv 1$ on some $[z_*,z^*]$ with $-\infty<z_*<z^*<+\infty$ and $u<1$ on some left neighborhood of $z_*$, then we would have $u'>0$ on the left of $z_*$, hence making the boundary condition $u(-\infty)=1$ unreachable.  
\end{rema}

Thanks to the previous a priori facts we can show, in the following lemma, that we may indeed restrict our attention to the study of \eqref{eq-tw} in order to study travelling wave solutions to \eqref{edp}.

\begin{lem}[Equivalent formulations] Let $(z^*,u)$ be a travelling wave profile in the sense of Definition~\ref{defi:profile}. Let $c\in \RR$ be given. Then the following are equivalent.
\begin{itemize} 
\item[(a)] The function $I$ defined by \eqref{eq:correspIu} is a weak solution to \eqref{edp} in the sense of Definition~\ref{defi:weak}.
\item[(b)] The function $u$ satisfies \eqref{eq-tw}.
\end{itemize}
\end{lem}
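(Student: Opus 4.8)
The plan is to prove the equivalence $(a)\Leftrightarrow(b)$ by a direct computation with the weak formulation \eqref{weak-formu}, exploiting the structure of the ansatz \eqref{eq:correspIu} and the a priori regularity furnished by Lemma~\ref{lem:a-priori}. The implication $(a)\Rightarrow(b)$ is the easier direction: if $I$ is a weak solution, then on the open set $\{x\cdot e-ct>z^*\}$ the function $I$ equals $u(x\cdot e-ct)$ which is $C^2$ there, so it is a \emph{classical} solution of \eqref{edp} on that open set, and plugging the ansatz in gives \eqref{eq-tw} pointwise. (If $z^*=-\infty$ this already finishes the direction.) For the reverse implication $(b)\Rightarrow(a)$, I would start from a test function $\varphi\in C^1_c(\RR\times\RR^N)$, perform the change of variables adapted to the moving frame — e.g. write points as $x=se+y$ with $s\in\RR$, $y\in e^\perp$, and $z\coloneqq s-ct$ — so that the weak formulation decouples into an integral over $y\in e^\perp$ (harmless, the profile does not depend on $y$) and a one-dimensional integral in $z$. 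One then needs to check that
\begin{equation}\label{eq:onedim-weak}
\int_{\RR} \bigl(I\partial_t\psi - (1-I)I_z\psi_z + I_z^2\psi + I(1-I)\psi\bigr)\,dz\,dt = 0
\end{equation}
for suitable one-dimensional test functions, where here $I(t,z)=u(z)$ for $z>z^*$ and $I=1$ for $z\le z^*$; note $\partial_t I = -c\,u'$ and $I_z=u'$ in the distributional sense on $\{z>z^*\}$.

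The key technical point is to handle the free boundary $z=z^*$ (only an issue in the saturated case $z^*>-\infty$). I would split the spatial integral at $z=z^*$: on $\{z>z^*\}$ everything is classical and, using \eqref{eq-tw} together with the identity $(1-u)u''=((1-u)u')'+u'^2$, an integration by parts in $z$ over $(z^*+\delta,+\infty)$ produces, after the moving-frame computation, only a boundary term at $z=z^*+\delta$ of the form $-(1-u(z^*+\delta))u'(z^*+\delta)\,\varphi$, which tends to $0$ as $\delta\to0$ precisely by item~(ii) of Lemma~\ref{lem:a-priori}, i.e. $(1-u(z))u'(z)\to0$ as $z\to z^*$. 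On $\{z\le z^*\}$ one has $I\equiv1$, hence $\nabla I=0$ and $I(1-I)=0$ there, so the only surviving contribution is $\int I\,\partial_t\varphi=\int \partial_t\varphi$, whose $z$-integral over $(-\infty,z^*)$ telescopes (in the comoving variable) to a boundary term at $z=z^*$ — and this is exactly cancelled by the matching term coming from the $\{z>z^*\}$ side, since $I$ is continuous across $z^*$ (both sides equal $1$) and the $\partial_t$ flux through the interface is continuous. I should also verify the membership $I\in L^2_{\mathrm{loc}}(\RR,H^1_{\mathrm{loc}}(\RR^N))$: boundedness is clear since $0\le I\le1$; the $H^1_{\mathrm{loc}}$ bound follows from $u'\in L^2(z^*,+\infty)$ (Lemma~\ref{lem:a-priori}(ii)) together with the fact that $I$ has no jump at $z^*$, so no singular part appears in $\nabla I$.

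The main obstacle I anticipate is being careful that the weak derivative $\nabla I$ genuinely has \emph{no} distributional contribution from the interface $\{x\cdot e-ct=z^*\}$: this relies on continuity of $I$ there, which holds because $u(z^*)=1$ by definition of a travelling wave profile, so that $I$ extends continuously by the constant $1$ — and then $\nabla I$ is simply $u'(x\cdot e-ct)e$ a.e. (set to $0$ on $\{z<z^*\}$), a locally $L^2$ function. Once this is secured, $|\nabla I|^2 = u'^2$ a.e. and the term $\int|\nabla I|^2\varphi$ in \eqref{weak-formu} is well-defined and matches the $\int u'^2\psi$ term produced by the integration by parts. The remainder of the argument is a bookkeeping of boundary terms, all of which either vanish in the limit $\delta\to0$ by Lemma~\ref{lem:a-priori}(ii) or cancel pairwise across the interface. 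I would conclude by remarking that combining this lemma with Lemma~\ref{lem:a-priori} reduces the proof of Theorem~\ref{th:tw} to the ODE analysis of \eqref{eq-tw}, which is carried out in the following subsections.
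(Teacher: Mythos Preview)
Your proposal is correct and follows essentially the same approach as the paper: both reduce the weak formulation to a one-dimensional identity via the moving-frame change of variables and then verify it by integration by parts, invoking Lemma~\ref{lem:a-priori}(ii) to kill the boundary term $(1-u)u'$ at $z^*$ and the continuity $u(z^*)=1$ to cancel the interface contributions from the two sides. Your write-up of the one-dimensional reduction around \eqref{eq:onedim-weak} is slightly muddled (you mix the original $\partial_t$ with the comoving variable $z$; after the change of variables the time derivative should become $-c\,\partial_z$ acting on the test function), but the bookkeeping you describe thereafter is exactly right.
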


\begin{proof} Let us assume $(a)$. Without loss of generality we can assume that $e$ is the basis vector associated with the first component $x_1$. Assume also $c\neq 0$. The $c=0$ case can be dealt with as a simpler adaptation of what follows. Plugging \eqref{eq:correspIu} into the weak formulation of Definition~\ref{defi:weak}, one obtains
\[
\int_{x_1-ct>z^*} u\partial_t\varphi - (1-u)u'\partial_{x_1}\varphi +|u'|^2\varphi + u(1-u)\varphi \,dt dx+ \int_{x_1-ct\leq z^*}\partial_t\varphi  \,dt dx= 0,
\]
where $u$ and $u'$ are evaluated at $x_1-ct$ in the first integral. Then we perform the linear change of variable $y_1 =  x_1-ct$, $y_i = x_{i-1}$ for $i\in\{2,\dots,N+1\}$ and choose as a test function the tensor product $\varphi(t,x_1,\dots,x_N) = \psi(y_1)\chi(y_2,\dots,y_N)$ with compactly supported $\psi$ and $\chi$ such that $\int\chi\neq0$. Using that $\partial_t\varphi  = -c\psi'\chi$ and $\partial_{x_1} \varphi = \psi'\chi+\psi\partial_{x_1}\chi$, one easily finds

\begin{equation}\label{weak_u}
c\psi(z^*) + \int_{z^*}^{+\infty} \left[cu\psi' + (1-u)u'\psi' - |u'|^2\psi-u(1-u)\psi\right]\dd z= 0\,,
\end{equation}
which yields \eqref{eq-tw} after integrating by parts for $\psi$ compactly supported in $(z^*,+\infty)$.

Let us assume $(b)$.  If $u$ satisfies \eqref{eq-tw}, then by Lemma~\ref{lem:a-priori}, the corresponding $I$ defined by \eqref{eq:correspIu} satisfies $I\in L^\infty(\mathbb{R}\times\mathbb{R}^N)\cap L^2_\text{loc}(\mathbb{R}, H^1_\text{loc}(\mathbb{R}^N))$. It remains to show that the weak formulation \eqref{weak-formu} is indeed satisfied. For this, one can plug the expression of $I$ into it and check that it equivalently holds if for all smooth $\psi$ supported in $\overline{(z^*, +\infty)}$,  one has \eqref{weak_u}. Observe that unlike the previous implication here one has to allow test functions such that $\psi(z^*)\neq0$ when $z^*$ is finite. However this holds again by integration by parts using Lemma~\ref{lem:a-priori} for the boundary terms and the strong formulation \eqref{eq-tw}.
\end{proof}

\subsection{Reaching a first order ODE}\label{ss:reaching}

We now aim at constructing travelling wave solutions. Our approach  consists in transforming the problem into a first order ODE, a strategy already proved to be very relevant in related situations, see \cite{San-Mai-94, San-Mai-95}, \cite{Mal-Mar-03}, \cite{Dra-Tak-21} and the references therein. However, because of the degeneracy of the considered equation not in divergence form, the reached ODE is singular, in sharp contrast with \cite{Eng-Gav-San-13} or  \cite{Dra-Tak-21}, and infinitely many waves with the same speed actually co-exist.

\medskip

From Lemma \ref{lem:a-priori} $(i)$, $u$ is a $C^2$ bijection of   $(z^*,+\infty)$ onto $(0,1)$, and we may thus look for $u^{-1}:(0,1)\to (z^*,+\infty)$. Obviously
\begin{equation}
    \label{der-u-moin-un}
    (u^{-1})'(u(z))=\frac{1}{u'(z)}.
\end{equation}
Next, we define
\begin{equation}\label{h-def}
    h\coloneqq (u')^2\circ u^{-1}\,,
\end{equation}
where $\circ$ denotes the composition of functions. By \eqref{der-u-moin-un}, we find 
\[
h' = 2u'' \circ u^{-1}
\]
and replacing the expression for $u''$ derived from \eqref{eq-tw}, we obtain
\[
\frac{dh}{dr} (r) = -\frac{2c}{1-r} u'\circ u^{-1}(r) -2r.
\]
Thus we look for $h:(0,1)\to (0,+\infty)$ solving
 \begin{equation*}\label{edo-h}
\begin{cases} \displaystyle\frac{dh}{dr}=\frac{2c}{1-r}\sqrt{h^+} -2r \quad  & 0<r<1, \vspace{5pt}\\
h(0)=0,
\end{cases}
\end{equation*}
where $h^+ \coloneqq \max(h,0)$ and where the initial datum $h(0)=0$ is a direct consequence of the a priori fact  $u'(+\infty)=0$ proved in Lemma \ref{lem:a-priori} $(ii)$. Note also that the a priori fact  $(1-u(z))u'(z)\to 0$ as $z\to z^*$ proved in Lemma \ref{lem:a-priori} $(ii)$ enforces $(1-r)^2h(r)\to 0$ as $r\nearrow 1$ for any solution.

\subsection{A Cauchy problem}\label{ss:Cauchy}

In this subsection we thus investigate the Cauchy problem
 \begin{equation}\label{h-Cauchy}
\begin{cases} \displaystyle \frac{dh}{dr}=\frac{2c}{1-r}\sqrt{h^+} -2r \quad 0<r<1, \vspace{5pt}\\
h(0)=0.
\end{cases}
\end{equation}
By Cauchy-Peano theorem, there exists at least  one local solution, defined on some $[0,r^*)$ with $0<r^*\leq 1$ (anyway, observe that $r\in[0,1)\mapsto -r^2$ is a global solution!). 

If a solution vanishes at some $r_0>0$ then, from the equation, $h'(r_0)=-2r_0<0$. As a result a solution can only be positive on $(0,r^*)$, or positive on some $(0,r_0)$-negative on $(r_0,r^*)$, or negative on $(0,r^*)$ (and in this case this has to be $r\mapsto -r^2$).

We  claim that any local solution is actually global. Indeed,  
 $h'\geq -2r$ provides $h(r)\geq -r^2$, while $h'\leq \frac{2c}{1-r}\sqrt{h^+}$ provides $h(r)\leq c^2\left(\log(1-r)\right)^2$. Hence
\begin{equation}\label{log-dessus}
-r^2\leq h(r)\leq c^2\left(\log(1-r)\right)^2,
\end{equation}
which prevents blow-up at $r^*<1$. Hence $r^*=1$ which proves the claim. 

If a solution to \eqref{h-Cauchy} further satisfies $h>0$ on $(0,1)$, then this corresponds to a travelling wave through   subsection \ref{ss:reaching}.

We now discuss  the value of $c>0$. 

\begin{lem}[Small speeds]\label{lem:small-speeds} If $0<c<2$ then the only solution of \eqref{h-Cauchy} is $r\mapsto -r^2$.
\end{lem}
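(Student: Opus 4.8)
The plan is to argue by contradiction. The function $r\mapsto -r^2$ is obviously a solution (it has $h^+\equiv 0$), so it suffices to show that when $0<c<2$ no other solution can exist. Let $h$ be a solution with $h\not\equiv -r^2$. By the trichotomy recalled just before the statement — a solution vanishing at some $r_0>0$ has slope $-2r_0<0$ there, so the open set $\{h>0\}$, if non-empty, is a single interval of the form $(0,b)$ — and since $h\not\equiv -r^2$ rules out the alternative $h<0$ on $(0,1)$, such an $h$ is positive on a maximal interval $(0,b)$ with $0<b\le 1$. On $(0,b)$ one has $h^+=h$, and integrating the equation from $0$ (using $h(0)=0$) gives the identity $h(r)=-r^2+2c\int_0^r\frac{\sqrt{h(s)}}{1-s}\,\dd s$.

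The central device is the self-referential quantity $m(r):=\sup_{0<s\le r}\frac{\sqrt{h(s)}}{s}$. First I would check it is finite: the a priori bound \eqref{log-dessus} gives $\sqrt{h(s)}\le -c\log(1-s)\le\frac{cs}{1-s}\le 2cs$ for $s\le\tfrac12$, so $m$ is well-defined, non-decreasing, and bounded by $2c$ on $(0,\beta]$ with $\beta:=\min(b,\tfrac12)>0$. I would also record the elementary identity $\int_0^r\frac{s}{1-s}\,\dd s=-r-\log(1-r)=\frac{r^2}{2}\big(1+\eta(r)\big)$, where $\eta(r)=\sum_{k\ge 3}\frac{2r^{k-2}}{k}$ is non-negative, non-decreasing, and $\eta(r)\to 0$ as $r\to 0^+$.

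Now the bootstrap. For $r\in(0,\beta]$, bounding $\sqrt{h(s)}\le m(r)\,s$ inside the integral identity yields $h(r)\le -r^2+2c\,m(r)\cdot\frac{r^2}{2}\big(1+\eta(r)\big)=r^2\big(c\,m(r)(1+\eta(r))-1\big)$; since $h(r)>0$, this forces $c\,m(r)(1+\eta(r))>1$ and $\frac{\sqrt{h(r)}}{r}\le\sqrt{c\,m(r)(1+\eta(r))-1}$. Because $m$ and $\eta$ are non-decreasing, the analogous bound at each $s\le r$ is dominated by the one at $r$, so taking the supremum over $s\in(0,r]$ gives $m(r)\le\sqrt{c\,m(r)(1+\eta(r))-1}$, i.e. $m(r)^2-c(1+\eta(r))\,m(r)+1\le 0$. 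For this quadratic inequality to be satisfiable by the real number $m(r)$, its discriminant must be non-negative, i.e. $c^2(1+\eta(r))^2\ge 4$. Letting $r\to 0^+$ and using $\eta(r)\to 0$ gives $c\ge 2$, contradicting $0<c<2$. Hence $r\mapsto-r^2$ is the only solution.

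The delicate point — and the reason a naive comparison argument is not available — is that the nonlinearity $\sqrt{h}$ is not Lipschitz at $h=0$, which is exactly what makes \eqref{h-Cauchy} non-uniquely solvable. Introducing $m(r)$ as a supremum, together with the monotonicity of $m$ and $\eta$, is what lets one pin down the borderline growth $\sqrt{h(r)}\sim m\,r$ near $0$ without assuming a priori that $h$ is asymptotically linear; the limiting constraint $m^2-cm+1\le 0$ then has no real solution precisely when $c<2$, which is no coincidence: it is the characteristic equation $\lambda^2-c\lambda+1=0$ of the linearisation of the travelling-wave ODE at $u=0$.
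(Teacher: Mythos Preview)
Your proof is correct and shares the paper's core idea: assume a solution with $h>0$ near $r=0$, bound $\sqrt{h(r)}/r$ self-referentially, and arrive at the characteristic constraint $M^2-cM+1\le 0$, which is vacuous when $c<2$. The execution differs, however. The paper works from the identity $\frac{d\sqrt{h}}{dr}=\frac{c}{1-r}-\frac{r}{\sqrt{h}}$, integrates to $\sqrt{h(r)}\le c(1+\eps)r-\int_0^r\frac{s}{\sqrt{h(s)}}\,ds$, and then bootstraps explicitly via the recursion $M_{n+1}=c(1+\eps)-1/M_n$, showing this sequence drops below zero in finitely many steps when $c(1+\eps)<2$. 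You instead integrate $h'$ itself and short-circuit the iteration by introducing the supremum $m(r)=\sup_{0<s\le r}\sqrt{h(s)}/s$, which delivers the quadratic inequality $m(r)^2-c(1+\eta(r))m(r)+1\le 0$ in one step. Your version is tidier (no sequence to track, and the discriminant condition appears immediately), while the paper's explicit iteration makes the ``blow-down'' mechanism more visible; both ultimately rest on the same linearised obstruction, as you note in your final paragraph.
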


\begin{proof} Let $0<c<2$ and let us choose $\eps>0$ small enough that $c<\frac{2}{1+\eps}$. Next select  $r_\eps>0$ small enough so that
\begin{equation}\label{eq:corde}
-\log(1-r)\leq(1+\eps)r\quad\text{for } r\in(0,r_\eps)\,.  
\end{equation}
Assume by contradiction that there is a solution such that $h>0$ on $(0,r_\eps)$. It satisfies
\[
\frac{d \sqrt{h}}{dr} (r)=\frac{c}{1-r}-\frac{r}{\sqrt{h(r)}}, \quad 0<r<r_\eps.
\]
Integrating this and using \eqref{eq:corde}, we obtain
\begin{equation}\label{eq:ineg}
 \sqrt{h(r)}\leq c(1+\eps)r - \int_0^r\frac{s}{\sqrt{h(s)}}\dd s, \quad 0<r<r_\eps.
\end{equation}
In particular $\sqrt{h(r)} \leq c(1+\eps)r$, which can be plugged into \eqref{eq:ineg} to obtain 
$$
\sqrt{h(r)}\leq \left(c(1+\eps)-\frac{1}{ c(1+\eps)}\right)r.
$$
By bootstrapping the bound one finds that 
\begin{equation}\label{eq:bound}
 \sqrt{h(r)} \leq M_{n+1}r, \quad 0<r<r_\eps,
 \end{equation}
 as long as $M_n$ defined iteratively by
\begin{equation}\label{eq:Mn}
M_{n+1} = c(1+\eps) - \frac{1}{M_n},\qquad M_0 = c(1+\eps),
\end{equation}
is nonnegative. Observe that since $c<\frac{2}{1+\eps}$, there is $\alpha<1$ such that $c(1+\eps) - x^{-1}\leq \alpha x$ for all $x>0$. Therefore $M_{n}\leq M_0\alpha^n$ and in particular there is a first $n_0\in\NN$ such that $M_{n_0}<\frac 1 {c(1+\ep)}$. Then $M_{n_0+1}<0$ and \eqref{eq:bound} is a contradiction. Hence, if $c<2$, the only solution is  negative in a neighborhood of $0$ which leaves only $h:r\mapsto -r^2$.
\end{proof}

We now work with $c\geq 2$. We define the relevant quantities
\begin{equation}
\label{def-lambda}
\lambda^-\coloneqq\frac{c-\sqrt{c^2-4}}{2}, \quad \lambda^+\coloneqq\frac{c+\sqrt{c^2-4}}{2},
\end{equation}
solving $\lambda^2-c\lambda+1=0$, and note that $\frac 1 c <\lambda^-\leq \lambda ^+<c$, with equality if and only if $c=2$.

As a preliminary result let us describe some sub- and supersolutions of \eqref{h-Cauchy}. These results will be used repeatedly in the rest of the analysis. The proof 
of the lemma consists in straightforward computations and elementary inequalities and is omitted.

\begin{lem}[Sub- and supersolutions]\label{lem:sub-super} One has the following properties.
\begin{itemize}
\item[(a)] The function $r\mapsto c^2\left(\log(1-r)\right)^2$ is a strict supersolution  of \eqref{h-Cauchy} on $(0,1)$.
\item[(b)] If $\lambda\in [\lambda^-, \lambda^+]$, then $r\mapsto \lambda^2r^2(1-r)^2$ is a strict subsolution of \eqref{h-Cauchy} on $(0,1)$. 
\item[(c)] If $\lambda\in (0, \lambda^-)\cup(\lambda^+, +\infty)$, then  $r\mapsto \lambda^2r^2(1-r)^2$ is a strict supersolution of \eqref{h-Cauchy} on $(0,r_\lambda)$ for some $r_{\lambda}\in(0,1)$.
\item[(d)] If $c\in(2, \frac{3\sqrt{2}}{2})$, $\alpha>0$ and $\beta\in((\lambda^{-})^{-2}-1, 1)$, then  $r\mapsto (\lambda^-)^2r^2(1+\alpha r^{\beta})$ is a strict supersolution of \eqref{h-Cauchy} on $(0,r_{\alpha, \beta})$ for some $r_{\alpha,\beta}\in(0,1)$.
\end{itemize}
\end{lem}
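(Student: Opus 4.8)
The plan is to verify each of the four items by a direct pointwise computation on the relevant interval. In every case the candidate function $g$ is $C^1$ and nonnegative there, so $\sqrt{g^+}=\sqrt g$, and it remains to compute $g'(r)$ and compare it with $\frac{2c}{1-r}\sqrt{g(r)}-2r$: one wants strict ``$>$'' for a supersolution of \eqref{h-Cauchy} and strict ``$<$'' for a subsolution. The only facts I would use repeatedly are the defining relation $(\lambda^\pm)^2-c\lambda^\pm+1=0$, equivalently $c\lambda^\pm=(\lambda^\pm)^2+1$, and the ordering $\tfrac1c<\lambda^-\le\lambda^+<c$ recorded after \eqref{def-lambda}.

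For (a), I would note that $g(r)=c^2(\log(1-r))^2$ has $\sqrt{g(r)}=-c\log(1-r)>0$ on $(0,1)$ and that differentiation gives exactly $g'(r)=\frac{2c}{1-r}\sqrt{g(r)}$, which strictly exceeds $\frac{2c}{1-r}\sqrt{g(r)}-2r$ since $r>0$ (and $g(0)=0$). For (b) and (c), with $g(r)=\lambda^2r^2(1-r)^2$ one has $\sqrt{g(r)}=\lambda r(1-r)$, $g'(r)=2\lambda^2r(1-r)(1-2r)$ and $\frac{2c}{1-r}\sqrt{g(r)}-2r=2r(c\lambda-1)$; dividing by $2r>0$, the super/subsolution inequality reduces to comparing $\lambda^2(1-r)(1-2r)$ with $c\lambda-1$. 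Here I would use that $(1-r)(1-2r)<1$ on all of $(0,1)$ (the value $1$ is attained only at $r=0$, and the product is even $\le0$ on $[\tfrac12,1)$) together with $c\lambda-1>0$ (since $\lambda>\tfrac1c$): for $\lambda\in[\lambda^-,\lambda^+]$ this gives $\lambda^2(1-r)(1-2r)<\lambda^2\le c\lambda-1$ throughout $(0,1)$, which is the subsolution statement (b); for $\lambda\in(0,\lambda^-)\cup(\lambda^+,+\infty)$ one instead has $\lambda^2>c\lambda-1$, so by continuity $\lambda^2(1-r)(1-2r)>c\lambda-1$ on some maximal interval $(0,r_\lambda)$, which is (c).

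Part (d) is where the real care is needed. Writing $\mu:=\lambda^-$ and $g(r)=\mu^2r^2(1+\alpha r^\beta)$ (positive near $0$ since $\beta\ge(\lambda^-)^{-2}-1\ge0$), one has $\sqrt{g(r)}=\mu r\sqrt{1+\alpha r^\beta}$ and $g'(r)=\mu^2\big(2r+(2+\beta)\alpha r^{1+\beta}\big)$. I would estimate $\frac{2c}{1-r}\sqrt{g(r)}-2r$ from above via $\sqrt{1+\alpha r^\beta}\le1+\tfrac\alpha2 r^\beta$; crucially, thanks to $c\mu=\mu^2+1$ the order-$r$ contributions cancel exactly, leaving
$$
g'(r)-\Big(\tfrac{2c}{1-r}\sqrt{g(r)}-2r\Big)\ \ge\ r^{1+\beta}\Big((2+\beta)\mu^2\alpha-\tfrac{c\mu\alpha}{1-r}\Big)-\tfrac{2c\mu r^2}{1-r}.
$$
As $r\to0^+$ the bracket tends to $\alpha\big((1+\beta)\mu^2-1\big)$, which is strictly positive precisely because $\beta>(\lambda^-)^{-2}-1$; and since the hypothesis $c\in(2,\tfrac{3\sqrt{2}}{2})$ is exactly what makes $\big((\lambda^-)^{-2}-1,1\big)$ a nonempty interval, $\beta$ may indeed be taken $<1$, so that $r^{1+\beta}$ dominates $r^{2}$ as $r\to0$. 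Hence the displayed lower bound is strictly positive on some $(0,r_{\alpha,\beta})$, giving the supersolution property. I expect this exponent bookkeeping — the exact cancellation at order $r$, the positivity of the $r^{1+\beta}$-coefficient governed by $\beta>(\lambda^-)^{-2}-1$, and the domination $1+\beta<2$ — to be the only nontrivial point; the computations behind (a)--(c) are essentially one line each.
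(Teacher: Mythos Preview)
Your proof is correct. The paper actually omits the proof of this lemma entirely, stating only that it ``consists in straightforward computations and elementary inequalities''; your write-up supplies precisely those computations in the manner the authors evidently had in mind, including the key cancellation $c\lambda^-=(\lambda^-)^2+1$ in part~(d) and the observation that $c\in(2,\tfrac{3\sqrt2}{2})$ is exactly the range making $((\lambda^-)^{-2}-1,1)$ nonempty.
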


We start by constructing  a \lq\lq large'' solution, in the sense that $h(1)=+\infty$, corresponding to a saturated wave.

\begin{lem}[A first \lq\lq large''  solution] \label{lem:large} Assume $c\geq  2$. Then the Cauchy problem \eqref{h-Cauchy} has a  solution $H$ satisfying
\begin{equation}\label{H-coincee}
(\lambda^+)^2 r^2(1-r)^2< H(r)\leq c^2\left(\log(1-r)\right)^2, \quad 0<r<1.
\end{equation}
Furthermore, this solution is increasing on $[0,1)$ and satisfies $H(1)=+\infty$, more precisely
\begin{equation}\label{eq-en-1-H}
H(r)\sim c^2\left(\log(1-r)\right)^2, \quad \text{ as } r\to 1.
\end{equation}
\end{lem}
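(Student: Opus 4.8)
The plan is to construct the large solution $H$ by a monotone approximation or Perron-type argument between the sub- and supersolutions provided by Lemma~\ref{lem:sub-super}, and then to identify its behavior near $r=1$ precisely. First I would observe that, by Lemma~\ref{lem:sub-super}(a) and (b), the function $\overline{h}(r) = c^2(\log(1-r))^2$ is a strict supersolution on $(0,1)$ and $\underline{h}(r) = (\lambda^+)^2 r^2(1-r)^2$ is a strict subsolution on $(0,1)$, with $\underline{h}(0) = \overline{h}(0) = 0$ and $\underline{h} < \overline{h}$ on $(0,1)$ (the latter inequality is an elementary check). The idea is then to produce a solution $H$ of the Cauchy problem \eqref{h-Cauchy} squeezed between them. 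One clean way: for $\delta>0$ small consider the Cauchy problem with initial datum $h(\delta) = \underline h(\delta) + \kappa$ for a suitable small $\kappa>0$ placing the datum strictly between $\underline h$ and $\overline h$ at $r=\delta$; solve forward (the solution is global on $[\delta,1)$ by the same a priori bound \eqref{log-dessus} that prevented blow-up, since once squeezed between a sub- and a supersolution it stays squeezed by a standard comparison argument, using that $\sqrt{h^+}$ is nondecreasing in $h$ so the ODE is "quasimonotone" in the right sense). Then let $\delta \to 0$ and extract, via Arzelà–Ascoli on compact subsets of $[0,1)$ (equicontinuity from the uniform bound on $h'$ coming from \eqref{log-dessus}), a limit $H$ which solves \eqref{h-Cauchy} with $H(0)=0$ and still satisfies $\underline h \le H \le \overline h$. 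To upgrade to the strict inequality $(\lambda^+)^2 r^2(1-r)^2 < H(r)$ claimed in \eqref{H-coincee}, I would invoke that $\underline h$ is a \emph{strict} subsolution, so equality at an interior point is impossible by the comparison/uniqueness-of-touching argument.

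Next, for monotonicity: since $H \ge \underline h > 0$ on $(0,1)$, on this interval $H$ actually solves the regularized equation $H' = \tfrac{2c}{1-r}\sqrt{H} - 2r$ with $\sqrt{H}$ smooth, so $H$ is $C^1$ there, and I need $H' > 0$. From $H(r) > (\lambda^+)^2 r^2(1-r)^2$ we get $\sqrt{H} > \lambda^+ r(1-r)$, hence $H'(r) > \tfrac{2c}{1-r}\lambda^+ r(1-r) - 2r = 2r(c\lambda^+ - 1) > 0$ because $\lambda^+ > 1/c$ (as noted right after \eqref{def-lambda}); near $r=0$, $H'(0) = 0$ from the equation but $H$ is increasing on $(0,\eta)$ for small $\eta$ since $H>0$ there forces $H'(r) = \tfrac{2c}{1-r}\sqrt{H} - 2r > 0$ as soon as $\sqrt{H}/r$ stays bounded below, which the subsolution guarantees. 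So $H$ is increasing on $[0,1)$.

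Finally, the asymptotics \eqref{eq-en-1-H}. The upper bound $H(r) \le c^2(\log(1-r))^2$ is already in hand, so it suffices to show $\liminf_{r\to1} H(r)/(\log(1-r))^2 \ge c^2$, equivalently $\liminf \sqrt{H(r)}/(-\log(1-r)) \ge c$. For this I would work with $g = \sqrt{H}$, which satisfies $g' = \tfrac{c}{1-r} - \tfrac{r}{g}$ on, say, $(1/2,1)$. Since $H$ is increasing and positive, $g$ is bounded below by $g(1/2)>0$ on $(1/2,1)$, so the term $r/g \le 1/g(1/2)$ is bounded; integrating $g' \ge \tfrac{c}{1-r} - \tfrac{1}{g(1/2)}$ from $1/2$ to $r$ yields $g(r) \ge c(-\log(1-r)) - C$ for a constant $C$, which gives the desired $\liminf$ and also re-confirms $g(r)\to+\infty$, i.e. $H(1)=+\infty$. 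Combining with the upper bound gives $g(r) \sim c(-\log(1-r))$, hence \eqref{eq-en-1-H}.

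I expect the main obstacle to be the rigorous comparison argument near $r=0$: the right-hand side $\tfrac{2c}{1-r}\sqrt{h^+}$ is only Hölder-$\tfrac12$ in $h$ at $h=0$, so uniqueness fails there (indeed $-r^2$ is another solution), and one must take care that the approximating solutions stay trapped above the strict subsolution all the way down to the origin rather than collapsing onto the negative branch. The strictness of the subsolution in Lemma~\ref{lem:sub-super}(b), together with choosing the approximate initial data to sit a definite amount above $\underline h$, is exactly what makes the trapping survive the limit; making this precise (e.g. a Gronwall-type estimate on $h - \underline h$ on $[\delta, r]$ exploiting that $\sqrt{\cdot}$ is increasing) is the technical heart of the proof.
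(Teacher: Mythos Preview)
Your proposal is correct and close in spirit to the paper's proof, but with two differences worth flagging. For the \emph{construction}, the paper uses a monotone iteration: starting from $h_0=\underline h$ and defining $h_{n+1}(r)=2c\int_0^r\frac{\sqrt{h_n^+(s)}}{1-s}\,ds-r^2$, one checks by induction that $\underline h\le h_n\le h_{n+1}\le\overline h$, and the monotone limit $H$ solves the Cauchy problem by monotone convergence in the integral formulation. This sidesteps both the compactness extraction and the delicate limit $\delta\to 0$ you anticipate; it also yields the strict lower bound in \eqref{H-coincee} for free, since already the first iterate $h_1(r)=(c\lambda^+-1)r^2=(\lambda^+)^2r^2>(\lambda^+)^2r^2(1-r)^2$. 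Your Arzel\`a--Ascoli route is perfectly valid (note the equicontinuity really comes from the ODE combined with \eqref{log-dessus}, not from \eqref{log-dessus} alone, and you should let $\kappa=\kappa(\delta)\to 0$), but the iteration is cleaner here. For the \emph{asymptotics}, the paper first argues by contradiction that $H(1)=+\infty$ (if $H(1)=\ell<+\infty$ then $H'(r)\sim 2c\sqrt\ell/(1-r)$ integrates to $+\infty$), and then integrates $(\sqrt H)'\sim c/(1-r)$. Your route---using monotonicity to bound $r/g\le 1/g(1/2)$ and integrating $g'\ge c/(1-r)-C$ directly---is a nice shortcut that gets both $H(1)=+\infty$ and the sharp lower bound in one stroke.
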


\begin{proof} It is straightforward to check that, for any $c>0$, $\overline h(r)\coloneqq c^2\left(\log(1-r)\right)^2$ is a strict supersolution on $(0,1)$ for the ODE, meaning $\overline h\,'>\frac{2c}{1-r}\sqrt{\overline h\,^+}-2r$ on $(0,1)$. On the other hand, from Lemma \ref{lem:sub-super}, $\underline h(r)\coloneqq (\lambda^+)^2r^2(1-r)^2$
is a strict subsolution on $(0,1)$ for the ODE  meaning $\underline h\,'<\frac{2c}{1-r}\sqrt{\underline h\,^+}-2r$ on $(0,1)$. One can check that $\underline h < \overline h$ on $(0,1)$,  and the conclusion then follows from the classical monotone iteration method. To be more precise,  we construct a sequence $(h_n)_{n\geq 0}$ of functions on $[0,1)$ by letting $h_0\equiv \underline{h}$ and
$$
h_{n+1}'(r)=\frac{2c}{1-r}\sqrt{ h_n^+}-2r, \quad h_{n+1}(0)=0,
$$
or, equivalently,
\begin{equation}
\label{h_n-integrale}
h_{n+1}(r)=2c\int_0^r \frac{\sqrt{h_n^+(s)}}{1-s}\,ds-r^2, \quad 0\leq r<1.
\end{equation}
Using that $\underline h$ and $\overline h$ are strict sub- and supersolutions, one can prove by induction that 
$$
\underline{h}\leq h_n\leq h_{n+1}\leq \overline{h}, \quad \text{for all } n\geq 0.
$$As a result, there is a function $H$ on $[0,1)$ such that, for all $r\in [0,1)$, $h_n(r)\to H(r)$ as $n\to +\infty$. By the monotone convergence theorem, we can pass to the limit in the integral formulation \eqref{h_n-integrale} and reach the first conclusion.

Now, the constructed solution satisfies $H(r)\geq \underline h(r)=(\lambda^+)^2r^2(1-r)^2>\frac{r^2(1-r)^2}{c^2}$ so that, by the equation, $H'(r)>0$. Hence $H(1)$ exists in $(0,+\infty]$. If $H(1)=\ell<+\infty$, then $H'(r)\sim \frac{2c\sqrt \ell}{1-r}$ as $r\to 1$ so that, by integration, $H(1)=+\infty$, a contradiction. Hence $H(1)=+\infty$ and $\frac{H'(r)}{2\sqrt{H(r)}}\sim \frac{c}{1-r}$ as $r\to 1$ so that, by integration, we obtain \eqref{eq-en-1-H}.
\end{proof}

Hence, for any $c\geq 2$, the solution $H$ provided by Lemma \ref{lem:large} solves problem \eqref{h-Cauchy} (and, thus, already provides {\it one} saturated travelling wave, as detailed in subsection \ref{ss:back}).  
Furthermore, by Cauchy-Lipschitz theorem, this $H$ cannot be crossed by another solution and acts as a barrier.

In the sequel, a $(r,h)$-phase plane analysis provides valuable information. Observe first that, by the equation, the $r$-axis can only be crossed downhill. Next,  the bell-shaped curve
\begin{equation}
\label{bell}
B(r)\coloneqq\frac{r^2(1-r)^2}{c^2}, \quad 0\leq r\leq 1,
\end{equation}
plays a central role. Indeed, a solution $h$ decreases (resp. increases) when it is below (resp. above) the bell. 
Also, by using and differentiating the equation, we see that  if a solution $h$ touches the bell at some $0<r<\frac 12$ (resp. $\frac 12<r<1$) then  $h'(r)=0$, $h''(r)<0$ (resp. $h'(r)=0$, $h''(r)>0$) and therefore, since $B'(r)>0$ (resp. $B'(r)<0$), the graph of  $h$ does cross the bell-shaped graph of $B$ at $r$. Notice however that if a solution $h$ touches the bell at $r=\frac 12$ then $h''(\frac 12)=0$ and the bell is not crossed.

Equipped with the barrier $H$,  we can now construct a \lq\lq small'' solution, in the sense that $h(1)=0$, corresponding to the non-saturated wave.

\begin{lem}[A unique small solution]\label{lem:small-sol} Assume $c\geq 2$. Then there is a unique solution $h_0$  to the Cauchy problem \eqref{h-Cauchy} that further satisfies $h_0(1)=0$ (and it is positive on $(0,1)$). Also, there is $r_0\in (0,\frac 12)$ such that $h_0'>0$ on $(0,r_0)$ while $h_0'<0$ on $(r_0,1)$, and $h''(r_0)<0$. Furthermore, 
\begin{equation}
\label{vers-zero-h0}
\frac{r^2(1-r)^2}{c^2}<h_0(r)<(\lambda^-)^2r^2(1-r)^2, \quad 0<r<r_0,
\end{equation}
and there is $\varepsilon>0$ small enough such that
\begin{equation}
\label{vers-un}
0<h_0(r)<\frac{r^2(1-r)^2}{c^2}, \quad 1-\varepsilon<r<1.
\end{equation}
\end{lem}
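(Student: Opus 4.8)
The plan is a shooting argument among the solutions of \eqref{h-Cauchy} that are positive near $0$, using $H$ from Lemma~\ref{lem:large} as an impassable upper barrier and the rails $r\mapsto\lambda^{2}r^{2}(1-r)^{2}$ of Lemma~\ref{lem:sub-super} near $0$. Two elementary facts organize everything. First, on $\{h>0\}$ the right-hand side of \eqref{h-Cauchy} is locally Lipschitz, so two solutions never cross there and none crosses $H$. Second, by the phase-plane discussion after \eqref{bell}, a solution increases exactly while it lies above the bell $B(r):=r^{2}(1-r)^{2}/c^{2}$ and decreases exactly while it lies below it; consequently a solution that stays positive on all of $(0,1)$ satisfies $h(1)\in\{0,+\infty\}$ — the value $+\infty$ occurring precisely when $h$ eventually stays above $B$ (integrate $h'\sim 2c\sqrt h/(1-r)$ near $1$, as in Lemma~\ref{lem:large}), the value $0$ when $h$ eventually stays below $B$ (then $0<h<B\to0$ as $r\to1$). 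The wanted $h_{0}$ is the borderline solution that rises, meets $B$ exactly once, and then decreases down to $0$ at $r=1$.

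\emph{Construction.} Because \eqref{h-Cauchy} is not Lipschitz at $r=0$, its solutions that are positive near $0$ form a totally ordered continuum $\{h_{\theta}\}$ (each behaving like $(\lambda^-)^{2}r^{2}$ or $(\lambda^+)^{2}r^{2}$ at $0$, by trapping between the sub/supersolutions of Lemma~\ref{lem:sub-super}(b)--(c)), which includes $H$. I would parametrize this family, e.g. by $\theta=h(\tfrac12)$, and set $T(\theta):=\sup\{r\in(0,1]:h_{\theta}>0\text{ on }(0,r)\}$. One checks that $\{\theta:T(\theta)=1\}$ is upward closed (if $h_{\theta}>0$ on $(0,1)$ and $\theta'>\theta$ then $h_{\theta'}>h_{\theta}>0$ on $(0,1)$ by non-crossing), nonempty (it contains $H$), and that at its infimum $\theta_{0}$ one still has $T(\theta_{0})=1$ while $h_{\theta_{0}}(1)\neq+\infty$ — the latter because a blow-up of $h_{\theta_{0}}$ at $r=1$ is an open condition, detected on a compact subinterval via comparison with the supersolution $c^{2}(\log(1-r))^{2}$, and would force $T(\theta)=1$ for $\theta$ slightly below $\theta_{0}$, a contradiction. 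Hence $h_{0}:=h_{\theta_{0}}$ is positive on $(0,1)$ with $h_{0}(1)=0$, and is the minimal such solution. (Here one also needs the parameter set to be an interval, which follows from non-crossing and the squeeze at $r=0$, and continuous dependence of $h_{\theta}$ on $\theta$, which is standard on $\{h>0\}$ and robust across the zeros.)

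\emph{Qualitative properties and uniqueness.} Since $h_{0}(0)=h_{0}(1)=0$ and $h_{0}>0$ in between, $h_{0}$ has a turning point; at any critical point $r_{*}$ the equation gives $h_{0}(r_{*})=B(r_{*})$ and, upon differentiating, $h_{0}''(r_{*})=(4r_{*}-2)/(1-r_{*})$, so $r_{*}<\tfrac12$ at a local max and $r_{*}>\tfrac12$ at a local min (the value $r_{*}=\tfrac12$ being excluded since there $h_{0}-B$ has a strict local minimum equal to $0$). As $h_{0}$ rises off $0$ and a re-entry above $B$ would send it to $+\infty$ at $r=1$, the only possibility is one turning point $r_{0}\in(0,\tfrac12)$ with $h_{0}'>0$, $h_{0}>B$ on $(0,r_{0})$ and $h_{0}'<0$, $h_{0}<B$ on $(r_{0},1)$; this yields $h_{0}''(r_{0})<0$, the lower bound of \eqref{vers-zero-h0}, and \eqref{vers-un}. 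The upper bound of \eqref{vers-zero-h0} follows because for $\lambda$ slightly above $\lambda^-$ the \emph{subsolution} $\lambda^{2}r^{2}(1-r)^{2}$ lies above $h_{0}$ near $0$ (same quadratic order, larger coefficient) and cannot be crossed, after which one lets $\lambda\downarrow\lambda^-$. Uniqueness: if $\tilde h$ is another solution with $\tilde h(1)=0$, it is positive on $(0,1)$ and ordered against $h_{0}$; but $g:=\tilde h-h_{0}$ would satisfy $g'=\tfrac{2c}{1-r}\,g/(\sqrt{\tilde h}+\sqrt{h_{0}})$ on $(0,1)$, hence have the sign of $g$ there, contradicting $g(0)=g(1)=0$ with $g\not\equiv0$.

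\emph{Main obstacle.} The real difficulty sits at $r=0$: one must make rigorous the continuum $\{h_{\theta}\}$ of solutions through the origin — in particular at $c=2$, where $\lambda^-=\lambda^+$ and the branch is not a simple one-parameter quadratic family (logarithmic corrections appear) — identify the correct leading asymptotics of the member actually used, and handle the \emph{reversed} comparison valid near $0$ (strict subsolutions lie above, strict supersolutions below, nearby solutions), which is exactly what makes the upper estimate in \eqref{vers-zero-h0} non-automatic rather than a textbook comparison. A secondary, milder point is propagating continuous dependence of $h_{\theta}$ all the way to the singular endpoint $r=1$, which is absorbed by the $c^{2}(\log(1-r))^{2}$ and $B$ comparisons already used for Lemma~\ref{lem:large}.
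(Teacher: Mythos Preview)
Your overall plan --- shoot among the positive solutions of \eqref{h-Cauchy} parametrized by their value at $r=\tfrac12$ and select the infimal one that survives on all of $(0,1)$ --- is workable in principle, and your uniqueness argument (via the sign of $g'$) is equivalent to the paper's (which locates a point where $(h_2-h_1)'=0$, hence $h_1=h_2$ there, hence everywhere by Cauchy--Lipschitz). But the paper's construction is genuinely different and sidesteps exactly the obstacle you single out.

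For existence the paper shoots \emph{backward from the right endpoint}: take $r_n\nearrow 1$ and let $h_n$ solve the ODE with $h_n(r_n)=0$. Since the $r$-axis is only crossed downhill, each $h_n$ is positive on $(0,r_n)$, trapped below the barrier $H$, and therefore forced to satisfy $h_n(0)=0$. The $h_n$'s are monotone in $n$ (non-crossing on $\{h>0\}$), and the limit $h_0$ solves \eqref{h-Cauchy} with $h_0(1)=0$ by monotone convergence in the integral formulation. No structure of the solution continuum through the non-Lipschitz point $r=0$ is ever invoked, no continuous dependence at the singular endpoints is needed, and the delicate $c=2$ case requires no separate treatment: your ``main obstacle'' simply does not arise.

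Your argument for the upper bound in \eqref{vers-zero-h0} has a real gap: you claim that $\lambda^{2}r^{2}(1-r)^{2}$ lies above $h_0$ near $0$ for $\lambda$ slightly above $\lambda^-$ because of ``same quadratic order, larger coefficient'', but this presupposes $h_0(r)\sim(\lambda^-)^{2}r^{2}$, precisely the asymptotic you flag as unproven. The paper's argument is direct and needs no asymptotics at $r=0$: the strict subsolution $\underline h(r)=(\lambda^-)^{2}r^{2}(1-r)^{2}$ can only be crossed from below to above as $r$ increases, it lies strictly above the bell $B$ (since $\lambda^->1/c$), and at the unique turning point $r_0$ one has $h_0(r_0)=B(r_0)<\underline h(r_0)$; hence $h_0<\underline h$ on all of $(0,r_0)$.
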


\begin{proof} If there are two such solutions, $h_1$ and $h_2$, then the boundary conditions $(h_2-h_1)(0)=(h_2-h_1)(1)=0$ enforce the existence of $r_0\in(0,1)$ such that $(h_2-h_1)'(r_0)=0$. 
From the equation this implies $h_1(r_0)=h_2(r_0)$ and then $h_1\equiv h_2$ by Cauchy-Lipschitz theorem.

As for the existence, let us consider an increasing sequence $(r_n)_{n\geq 1}$ converging to 1. Denote $h_n$ a solution to the Cauchy problem 
 \begin{equation*}
\begin{cases} \displaystyle \frac{dh_n}{dr}=\frac{2c}{1-r}\sqrt{h_n^+} -2r \quad 0<r<1, \vspace{5pt}\\
h_n(r_n)=0.
\end{cases}
\end{equation*}
Since the $r$-axis can only be crossed downhill and $h_n'(r_n)=-2r_n<0$, we have $0<h_n(r)<H(r)$ for all $r\in(0,r_n)$, so that $h_n(0)=0$. Also, on the right of $r_n$, there holds $h_n(r)=r_n^2-r^2$ for all $(r_n,1]$. Since two solutions can only cross on the $h=0$ axis, we also have $ h_n(r)<h_{n+1}(r)<H(r)$ for all $r\in(0,1]$.  As a result there is a function $h_0$ on $[0,1]$ such that, for all $r\in[0,1]$,  $h_n(r)\to h_0(r)$ as $n\to+\infty$. In particular $h_0(0)=h_0(1)=1$ and, as in the proof of Lemma \ref{lem:large}, $h_0$ solves the ODE on $(0,1)$.

From the phase plane analysis, $h_0$ has to be below the bell \eqref{bell} in a neighborhood of 1, thus providing \eqref{vers-un}, has to cross the bell only once, at some $r_0\in(0,\frac 12)$, thus providing the sign of $h_0'$. As for \eqref{vers-zero-h0}, it follows again from the phase plane analysis and the fact that the right hand side in \eqref{vers-zero-h0} is a strict subsolution  that can only be crossed in one sense and that stands strictly above the bell (since $\lambda^->\frac 1c$).
\end{proof}

Hence, for any $c\geq 2$, the solution $h_0$ provided by Lemma \ref{lem:small-sol}  solves problem \eqref{h-Cauchy} (and, thus, already provides the non-saturated travelling wave, as detailed in subsection \ref{ss:back}).  Furthermore, this $h_0$ acts as a barrier for other possible solutions.

Equipped with $h_0$ and $H$, we can  now construct infinitely many \lq\lq large'' solutions (providing infinitely many saturated waves). To do so, for a shooting parameter $\alpha>0$, we consider the Cauchy problem 
 \begin{equation}\label{cauchy-un-demi}
\begin{cases} \displaystyle \frac{dh}{dr}=\frac{2c}{1-r}\sqrt{h^+} -2r \quad 0<r<1, \vspace{5pt}\\
h(\frac 12)=\alpha,
\end{cases}
\end{equation}
whose solution is denoted by $h_\alpha$. If $\alpha<h_0(\frac 12)$ then, from the above, $h_\alpha$ cannot solve problem \eqref{h-Cauchy} while being nonnegative. If $\alpha=h_0(\frac 12)$ then $h_\alpha \equiv h_0$ the so-called small solution. If $\alpha=H(\frac 12)$ then $h_\alpha\equiv H$ the so-called large solution constructed above. From using the solution to the ODE starting from any $h(0)>0$  as a barrier, or the upper bound in \eqref{log-dessus}, we see that 
$$
\alpha_{max}\coloneqq\sup \, \{ \alpha>0: h_\alpha(0)=0\}<+\infty,
$$
and that, using again monotony arguments as in the proof of Lemma \ref{lem:large}, the supremum is reached, that is  $h_{\alpha_{max}}(0)=0$. Note also that
$$
\alpha_{max}\geq H(1/2)>\frac{(\lambda^+)^2}{16}.
$$

\begin{lem}[Infinitely many large solutions]\label{lem:other-large} Assume $c\geq 2$. Let
\begin{equation}\label{shoot-alpha}
 h_0(1/2)<   \alpha \leq \alpha_{max}.
\end{equation}
Then the solution $h=h_\alpha$ to the Cauchy problem \eqref{cauchy-un-demi} also solves the Cauchy problem \eqref{h-Cauchy}, $h>0$ on $(0,1)$,
\begin{equation}\label{eq-en-1}
h(r)\sim c^2\left(\log(1-r)\right)^2 \quad \text{ as } r\to 1,
\end{equation}
and 
\begin{equation}
\label{vers-zero-part-1}
\frac{r^2(1-r)^2}{c^2}<h(r), \quad 0<r<\varepsilon,
\end{equation}
for some $\varepsilon>0$. Furthermore, the following holds.
\begin{enumerate}
\item [(a)] If $h_0(\frac 12)<\alpha<\frac{1}{16c^2}$, then there are $0<r_2<\frac 12<r_1<1$ such that $h'>0$ on $(0,r_2)\cup(r_1,1)$ and $h'<0$ on $(r_2,r_1)$.
\item [(b)] If $\alpha=\frac{1}{16c^2}$, then $h'>0$ on $(0,\frac 12)\cap(\frac 12, 1)$ and $h'(\frac 12)=0$.
\item [(c)] If $\frac{1}{16 c^2}<\alpha\leq \alpha_{max}$, then $h'>0$ on $(0,1)$. 
\end{enumerate}
\end{lem}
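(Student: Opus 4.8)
The strategy is a standard phase-plane/barrier analysis for the scalar ODE \eqref{cauchy-un-demi}, using the barriers $h_0$ and $H$ already constructed (Lemmas \ref{lem:small-sol} and \ref{lem:large}) together with the bell curve $B$ from \eqref{bell} and the sub/supersolutions of Lemma \ref{lem:sub-super}. First I would fix $\alpha$ in the range \eqref{shoot-alpha} and observe, by Cauchy--Lipschitz, that the solution $h_\alpha$ cannot cross either $h_0$ or $H$ away from the $r$-axis; since $h_0(1/2)<\alpha\le\alpha_{max}=H(1/2)$ (using $\alpha_{max}\ge H(1/2)$, which holds because $H$ itself is one of the admissible solutions and because $H$ acts as a barrier), and since both $h_0$ and $H$ are positive on $(0,1)$, we get $h_0<h_\alpha\le H$ on all of $(0,1)$ (with equality only if $\alpha=H(1/2)$). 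In particular $h_\alpha>0$ on $(0,1)$, so $\sqrt{h_\alpha^+}=\sqrt{h_\alpha}$ there and $h_\alpha$ is a genuine solution; moreover $h_\alpha(0)=0$ because $h_\alpha(0)\le H(0)=0$ on one side and $h_\alpha(0)\ge h_0(0)=0$ on the other. This simultaneously shows $h_\alpha$ solves \eqref{h-Cauchy}, is positive on $(0,1)$, and (sandwiched between $h_0$ near $r=1$... no: sandwiched \emph{above} $h_0$ is not enough; rather sandwiched \emph{below} $H$ is not enough either) --- for \eqref{eq-en-1} I would instead argue as in the last paragraph of the proof of Lemma \ref{lem:large}: since $h_\alpha$ cannot cross $H$ and $h_\alpha(0)=0$ with $h_\alpha>h_0$, phase-plane monotonicity (a solution lying above the bell must increase) forces $h_\alpha$ to stay eventually above the bell, hence $h_\alpha(1)=+\infty$, and then $h_\alpha'/(2\sqrt{h_\alpha})\sim c/(1-r)$ yields the precise asymptotics \eqref{eq-en-1} by integration. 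For \eqref{vers-zero-part-1}, near $r=0$ the solution lies above $h_0$, which by \eqref{vers-zero-h0} already lies strictly above the bell $B(r)=r^2(1-r)^2/c^2$, so $h_\alpha>B$ on $(0,\varepsilon)$.

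The bulk of the work is the trichotomy (a)--(b)--(c), which I would read off entirely from the geometry of the bell curve $B$ and the already-recorded facts about where a solution increases or decreases. Recall: $h_\alpha$ decreases exactly where it is strictly below $B$ and increases exactly where it is strictly above $B$; $B$ is increasing on $(0,1/2)$, decreasing on $(1/2,1)$, with maximum $B(1/2)=1/(16c^2)$; and when a solution \emph{touches} $B$ it crosses transversally unless the contact is at $r=1/2$ (where $B'=0$ and $h_\alpha''(1/2)=0$), the content of the paragraph following \eqref{bell}. Now distinguish the three cases by comparing $\alpha=h_\alpha(1/2)$ with $B(1/2)=1/(16c^2)$:

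In case (c), $\alpha>1/(16c^2)=B(1/2)\ge B(r)$ for all $r$, so $h_\alpha(1/2)$ is strictly above the bell at $r=1/2$. Since $h_\alpha(0)=0=B(0)$ and $h_\alpha>0$ just to the right of $0$ while, by \eqref{vers-zero-part-1}, $h_\alpha$ lies above $B$ near $0$, $h_\alpha$ starts above the bell. I claim $h_\alpha$ never touches $B$ on $(0,1)$: a first contact point $\rho$ would satisfy $h_\alpha(\rho)=B(\rho)\le B(1/2)<\alpha$, forcing $\rho\ne 1/2$ and $h_\alpha$ to cross $B$ downward there (impossible if $\rho$ is the \emph{first} contact, coming from above) --- here I'd use that at a transversal crossing from above, $h_\alpha$ was below $B$ just before $\rho$, contradiction; and if $\rho=1/2$ then $h_\alpha(1/2)=B(1/2)<\alpha$, contradiction. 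Hence $h_\alpha>B$ throughout, so $h_\alpha'>0$ on $(0,1)$. In case (b), $\alpha=B(1/2)$: the solution is above $B$ on $(0,1/2)$ (same first-contact argument, noting any earlier contact would be at a point where $B<B(1/2)=\alpha$) and touches $B$ exactly at $r=1/2$, where by the special non-crossing fact $h_\alpha''(1/2)=0$ and $h_\alpha$ stays above $B$ on $(1/2,1)$ as well; thus $h_\alpha'>0$ on $(0,1/2)\cup(1/2,1)$ and $h_\alpha'(1/2)=0$. In case (a), $h_0(1/2)<\alpha<1/(16c^2)=B(1/2)$: since $h_0(1/2)<B(1/2)$ is consistent with \eqref{vers-un} (indeed $h_0<B$ near $r=1$) but near $r=0$ we have $h_\alpha>h_0>B$; so $h_\alpha$ starts above $B$, and since $h_\alpha(1/2)=\alpha<B(1/2)$ it must cross $B$ downward at some first point $r_2\in(0,1/2)$; thereafter $h_\alpha<B$ and decreases; but $h_\alpha(1)=+\infty>B(1)=0$ (from \eqref{eq-en-1}), and $h_\alpha\le H$ with $H$ above the bell near $1$... more directly: since $h_\alpha(1)=+\infty$ and near $r=1$ any solution tending to $+\infty$ must be above $B$ (as $B$ stays bounded), $h_\alpha$ re-crosses $B$ upward at some $r_1\in(1/2,1)$ --- and this crossing point is the \emph{last} one because after it $h_\alpha$ increases and stays above $B$ forever. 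Transversality of both crossings (neither is at $1/2$) gives $h_\alpha'>0$ on $(0,r_2)$, $h_\alpha'<0$ on $(r_2,r_1)$, $h_\alpha'>0$ on $(r_1,1)$, as claimed. To see there are exactly two crossings (no extra oscillation between $r_2$ and $r_1$), note that on $(r_2,1/2)$ we have $h_\alpha<B$ forced by $h_\alpha(1/2)=\alpha<B(1/2)$ and the fact that once below the bell on an interval where $B$ is increasing the solution keeps decreasing hence stays below; and on $(1/2,r_1)$ similarly $h_\alpha$ stays below $B$ until it catches up. Finally, letting $\alpha$ range over the continuum $(h_0(1/2),1/(16c^2))$ in case (a) and over $(1/(16c^2),\alpha_{max}]$ in case (c) produces \emph{infinitely many} distinct saturated waves of each type, since distinct $\alpha$ give distinct solutions (no two cross off the axis, and they all pass through $(1/2,\alpha)$).

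\textbf{Expected main obstacle.} The only delicate point is the bookkeeping of how many times $h_\alpha$ can meet the bell $B$, i.e. ruling out spurious oscillations in case (a); this is where one must use the sign information "$h<B\Rightarrow h$ decreases" together with the monotonicity of $B$ on each of $(0,1/2)$ and $(1/2,1)$ to show the solution is trapped on the correct side of the bell between consecutive crossings, and the transversality-at-non-half-points fact to count each crossing simply. The asymptotics \eqref{eq-en-1} and the lower bound \eqref{vers-zero-part-1} are immediate from the existing lemmas, and the global existence/positivity is just the barrier sandwich $h_0<h_\alpha\le H$.
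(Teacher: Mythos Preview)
Your plan is correct and is exactly the paper's approach: the paper's own proof is a single sentence deferring to ``the above considerations, including the phase plane analysis'', and you have simply written out those details (barriers $h_0$, $H$, the bell $B$, and the crossing/transversality facts already recorded after \eqref{bell}).

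Two small points to clean up. First, you assert $\alpha_{max}=H(1/2)$, but the paper only records $\alpha_{max}\ge H(1/2)$ and never claims equality; since you use the upper barrier to conclude $h_\alpha(0)\le 0$, the safe move is to sandwich by $h_{\alpha_{max}}$ rather than by $H$ (the supremum is attained, so $h_{\alpha_{max}}(0)=0$, and $h_{\alpha_{max}}>h_0>0$ on $(0,1)$ gives the needed positivity for the no-crossing argument). Second, in case~(c) your first-contact reasoning is slightly garbled: a downward crossing at some $\rho<\tfrac12$ \emph{is} consistent with first contact from above; the real contradiction is that after such a crossing $h_\alpha$ decreases while $B$ increases on $(\rho,\tfrac12]$, so $h_\alpha$ stays below $B$ there and $\alpha=h_\alpha(\tfrac12)\le B(\tfrac12)$, contrary to $\alpha>1/(16c^2)$. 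With these tweaks your write-up is a faithful expansion of what the paper leaves implicit.
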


\begin{proof}  The proof is a direct consequence of the above considerations, including the phase plane analysis (note that $\frac{1}{16c^2}=B(1/2)$, the \lq\lq top of the bell \eqref{bell}'').
\end{proof}

Some of the results that have been shown up to now are illustrated in Figure \ref{fig:hrplane}, where a few solutions are drawn together with the main reference curves that we used in our analysis.

\begin{figure}
    \centering
    \includegraphics[scale=.75]{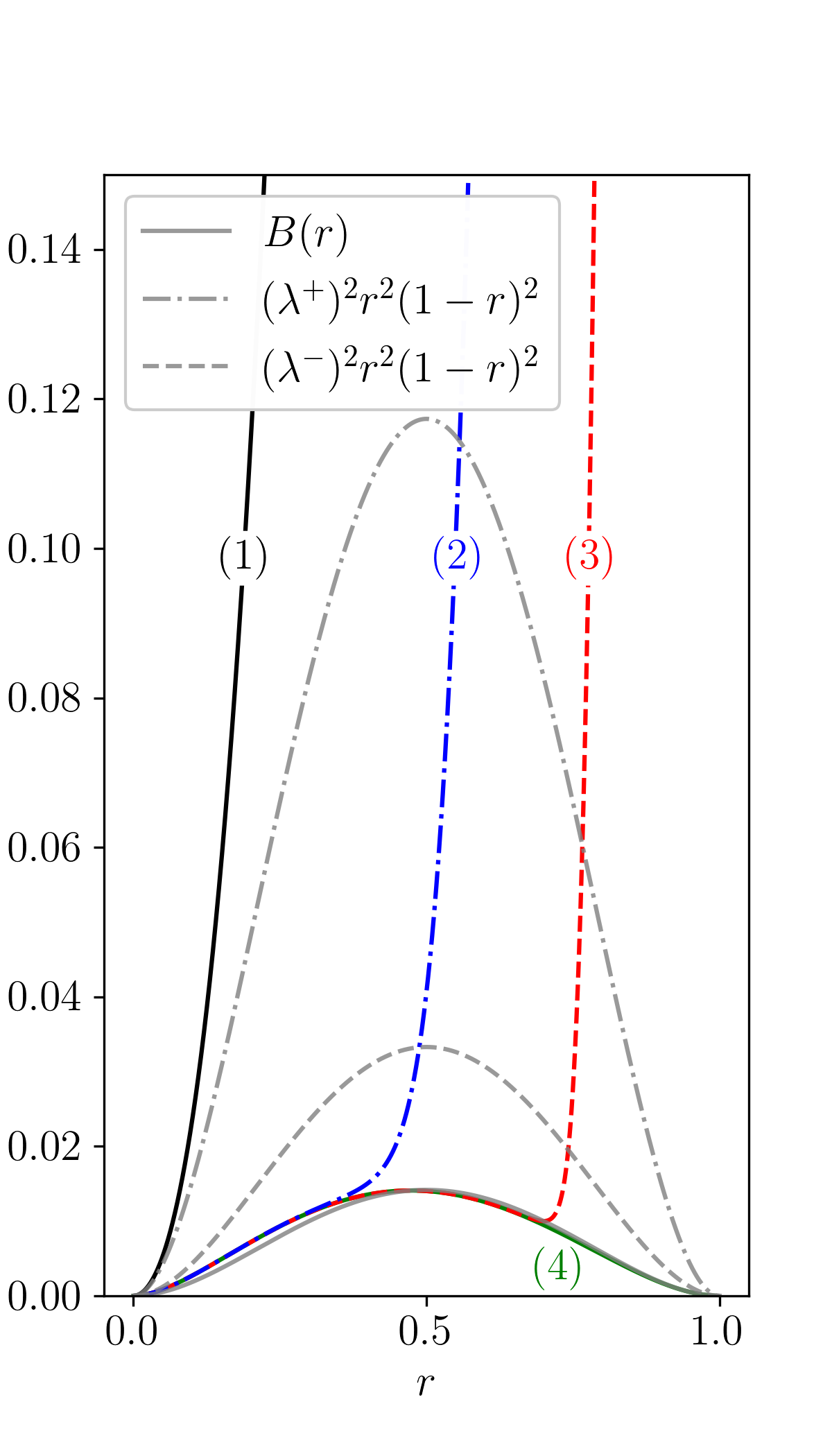}
    \includegraphics[scale=.75]{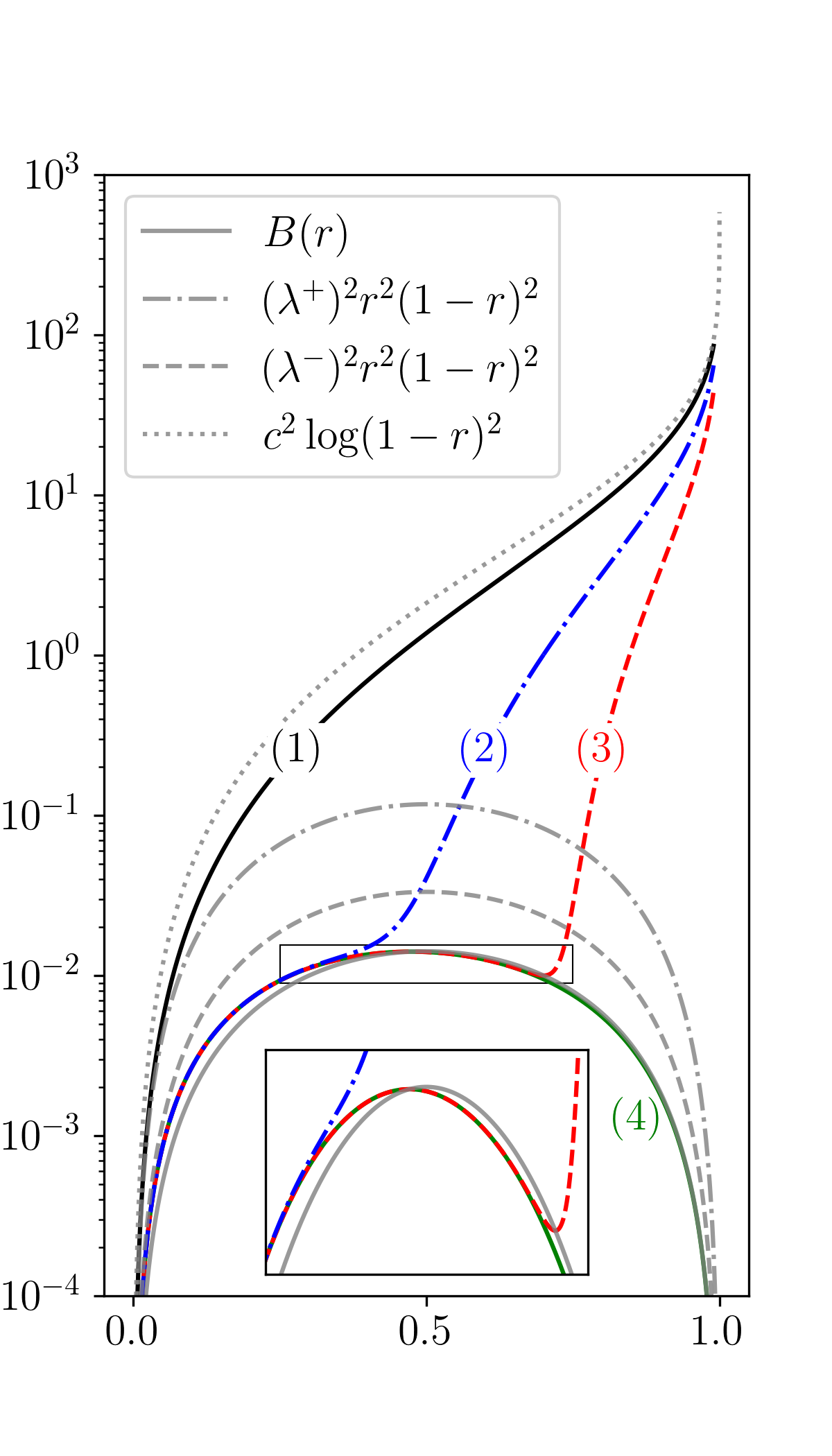}
    \caption{ The $(r,h)$-phase plane in linear (left) and log-scale (right) for $c=2.1$. The black, blue and red lines, labelled $(1)$, $(2)$ and $(3)$, correspond to three different saturated wave solutions and are computed numerically. The green line, labelled (4), corresponds to the unique non-saturated wave. In particular, with respect to Lemma \ref{lem:other-large}, $(1)$ and $(2)$ are two solutions of type $(c)$, whereas $(3)$ is a solution of type $(a)$. With respect to Lemma \ref{lem:behavior-zero}, 
    $(1)$ is a solution of type $(c)$, whereas $(2)$ and $(3)$ are solutions of type $(a)$. These waves  are represented in physical space in Figure \ref{fig:waves}.    }
    \label{fig:hrplane}
\end{figure}

\subsection{Boundary behaviors of the $h$'s}\label{ss:boundary}

Hence, there is a one-to-one correspondence between 
travelling waves $u$ (say normalized by $u(0)=\frac 12$) and the $h_\alpha$'s, with $h_0(1/2)\leq \alpha\leq \alpha_{max}$, provided by subsection \ref{ss:Cauchy}. 

Before returning to $u$, we precise the behavior of these $h_\alpha$'s as $r\to 0$ (which will translate later into an estimate on the tail of $u$ as $z\to +\infty$). This requires a combination of the phase plane analysis, some sub- and supersolutions, and some bootstrapping arguments.

\begin{lem}[Not below $\lambda^-$ as $r\to 0$]\label{lem:not-below-lambda-moins} Assume $c\geq 2$. Take $h=h_\alpha$ a solution provided by Lemma \ref{lem:small-sol} or Lemma \ref{lem:other-large}. Then, for any $0<\varepsilon<\lambda^-$, there is $r_\varepsilon\in(0,1)$ such that
$$
h(r)\geq (\lambda^--\varepsilon)^2r^2(1-r)^2, \quad \forall r\in(0,r_\varepsilon).
$$
\end{lem}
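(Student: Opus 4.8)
The plan is to use a lower-barrier argument combined with the bell-curve phase-plane analysis. Fix $0<\varepsilon<\lambda^-$ and set $\mu := \lambda^- - \varepsilon$. By Lemma~\ref{lem:sub-super}~(c), since $\mu \in (0,\lambda^-)$, the function $g_\mu(r) := \mu^2 r^2(1-r)^2$ is a \emph{strict supersolution} of the ODE in \eqref{h-Cauchy} on some $(0,r_\mu)$ — but that is the wrong direction for a lower bound, so the real content is to turn this around: a strict supersolution on $(0,r_\mu)$ that vanishes at $0$ \emph{to the same quadratic order} as $h$ can only be crossed in one direction, and I must show $h$ stays \emph{above} it near $0$, not below it. Concretely, I would argue by contradiction: suppose there is a sequence $\rho_k \searrow 0$ with $h(\rho_k) \le \mu^2 \rho_k^2(1-\rho_k)^2$. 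The first step is to record, from Lemma~\ref{lem:small-sol} (estimate \eqref{vers-zero-h0}) or from the construction in Lemma~\ref{lem:other-large} (estimate \eqref{vers-zero-part-1} together with the fact that $h \ge H > (\lambda^+)^2 r^2(1-r)^2$ in the large-solution case, and in the small-solution case $h_0 > B$), that in all cases $h(r) > B(r) = r^2(1-r)^2/c^2$ for $r$ in a right-neighborhood of $0$; in particular $h>0$ there and $h$ is increasing on $(0,r_0)$ for some $r_0>0$ (it lies above the bell).

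**Main argument.** On the region where $0<h$, the substitution $w := \sqrt{h}$ is $C^1$ and satisfies $w' = \frac{c}{1-r} - \frac{r}{w}$. The plan is to run a bootstrap on $w(r)/r$ exactly as in the proof of Lemma~\ref{lem:small-speeds}, but now producing a \emph{lower} bound. Since $h > B$ near $0$ we have $w(r) \ge r(1-r)/c > r/(2c)$ for small $r$, which is a crude linear lower bound $w(r) \ge m_0 r$ with $m_0 = 1/(2c)$ say. Feeding $w(s) \ge m_n s$ into $w'(r) = \frac{c}{1-r} - \frac{r}{w(r)} \ge c - \frac{1}{m_n} + o(1)$ and integrating from $0$ gives $w(r) \ge m_{n+1} r$ with $m_{n+1} = c - 1/m_n$ (up to an $o(1)$ that I would absorb by shrinking the interval at each finite stage). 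The fixed-point equation $m = c - 1/m$ has roots $\lambda^-$ and $\lambda^+$; since the iteration map $m \mapsto c - 1/m$ is increasing and has $\lambda^-$ as its \emph{smaller} fixed point, starting from $m_0 \in (0,\lambda^-)$ the sequence $m_n$ increases monotonically toward $\lambda^-$ — provided it never exits $(0,\lambda^-)$, which one checks: if $m_0>0$ small then $c - 1/m_0$ could be negative, so one must first bootstrap a few steps to land in $(0,\lambda^-)$, using that $h>B$ keeps $w/r$ bounded below by a positive constant on a fixed interval. Once $m_n \uparrow \lambda^-$, for any target $\mu = \lambda^- - \varepsilon$ there is $n$ with $m_n > \mu$, hence $w(r) \ge \mu r$, i.e. $h(r) \ge \mu^2 r^2 \ge \mu^2 r^2(1-r)^2$ on a suitable $(0,r_\varepsilon)$, contradicting $h(\rho_k) \le \mu^2\rho_k^2(1-\rho_k)^2$ for $k$ large. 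That contradiction proves the claim.

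**The main obstacle.** The delicate point is controlling the accumulated $o(1)$ errors in the bootstrap: each integration of $w'(r) \ge c - 1/m_n + (\text{terms }\to 0)$ introduces an error term (from $\frac{c}{1-r} - c$, from $r/w(r)$ not being exactly $1/m_n$, and from the lower limit of integration), and one must ensure these do not prevent $m_n$ from actually reaching every value below $\lambda^-$. The clean way to handle this is to fix $\varepsilon$ first, fix the finite number $N$ of bootstrap steps needed so that the \emph{exact} iteration reaches $m_N > \mu + \varepsilon/2$, then choose the working interval $(0,r_\varepsilon)$ small enough that the total error over those $N$ steps is at most $\varepsilon/2$; since $N$ is finite and each step's error tends to $0$ with the interval length, this is achievable. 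A secondary technical point is the very first lower bound $w \gtrsim r$: in the large-solution case this comes from $h \ge H > (\lambda^+)^2 r^2(1-r)^2$, and in the small-solution case from $h_0 > B$; but one should double-check that \emph{every} $h_\alpha$ with $h_0(1/2) < \alpha \le \alpha_{max}$ indeed satisfies $h_\alpha > B$ near $0$ — this follows from \eqref{vers-zero-part-1}, which is exactly the estimate recorded for this purpose in Lemma~\ref{lem:other-large}.
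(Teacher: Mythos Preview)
Your bootstrap has the dynamics backwards, and this is a genuine gap. You claim that starting from $m_0\in(0,\lambda^-)$ the iteration $m_{n+1}=\phi(m_n)$ with $\phi(m)=c-1/m$ increases monotonically toward $\lambda^-$. But $\phi(m)-m=-(m^2-cm+1)/m$, and for $m\in(0,\lambda^-)$ the quadratic $m^2-cm+1$ is strictly positive, so $\phi(m)<m$: the sequence \emph{decreases}. Concretely, with your proposed seed $m_0=1/(2c)<1/c$ one gets $m_1=c-2c=-c<0$ at the very first step; and even if you refine the seed to $m_0$ just below $1/c$ (the best that $h>B$ yields, since $\sqrt{B(r)}/r=(1-r)/c<1/c$), you still get $m_1<0$. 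The fixed point $\lambda^-$ is repelling from the left (indeed $\phi'(\lambda^-)=1/(\lambda^-)^2=(\lambda^+)^2\ge 1$), so no lower-bound bootstrap starting below $\lambda^-$ can climb up to it. Your ``first bootstrap a few steps to land in $(0,\lambda^-)$'' does not help: you would need to land \emph{above} $\lambda^-$, which is already stronger than the lemma.

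The paper's fix is to reverse the logic: assume by contradiction that $h(r)<(\lambda^--\varepsilon)^2 r^2(1-r)^2$ on an entire interval $(0,r_1)$, and then bootstrap this \emph{upper} bound downward via the integral formulation, obtaining $h(r)<K_n^2 r^2(1-r)^2$ with $K_{n+1}^2=(cK_n-1)/(1-r_0)^2$ and $K_0=\lambda^--\varepsilon$. By choosing $r_0$ small enough that $(1-r_0)^2X^2-cX+1$ has its smaller root above $K_0$, this sequence decreases with no fixed point to stop it and eventually forces $cK_n-1<0$, a contradiction. Hence some $r_\varepsilon\in(0,r_1)$ satisfies $h(r_\varepsilon)\ge(\lambda^--\varepsilon)^2 r_\varepsilon^2(1-r_\varepsilon)^2$, and then the strict-supersolution property of $r\mapsto(\lambda^--\varepsilon)^2 r^2(1-r)^2$ (Lemma~\ref{lem:sub-super}(c)) propagates this inequality to all of $(0,r_\varepsilon)$. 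The moral is that the repelling nature of $\lambda^-$ is precisely what makes the \emph{contradiction} argument work and the \emph{direct} lower-bound bootstrap fail.
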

\begin{proof}Let $0<\varepsilon \ll 1$ be fixed and let $r_0\in(0,1)$ be such that the smallest root of $(1-r_0)^2 X^2-cX+1$ is larger than $\lambda^--\varepsilon$. Up to taking a smaller $r_0$, by Lemma~\ref{lem:sub-super},  $r\to (\lambda^--\varepsilon)^2r^2(1-r)^2$ is also a strict supersolution on $(0,r_0)$.  Now, assume by contradiction that there is $0<r_1<r_0$ such that
\[
    h(r)< \left(\lambda^--\eps\right)^2r^2(1-r)^2, \quad \forall r\in(0,r_1).
\]
Using the integral formulation of the Cauchy problem, one can bootstrap and improve this bound to $0<h(r)< K_n ^2 r^2(1-r)^2$, for all $n\in \NN$, where the sequence $(K_n)_{n\in \NN}$ is nonnegative, decreasing, and given by the induction
\[
K_{n+1}^2 = \frac{cK_n-1}{(1-r_0)^2}\,,\quad K_0 = \lambda^--\eps.
\]
However such a sequence cannot converge since there is no root to $(1-r_0)^2X^2-cX+1$ on the left of $K_0$, hence a contradiction. Therefore for any $0<r_1<r_0$, there is $r_\eps\in(0,r_1)$ such that $\left(\lambda^--\eps\right)^2r_\eps^2(1-r_\eps)^2\leq h(r_\eps)$. As $r\to (\lambda^--\varepsilon)^2r^2(1-r)^2$  is a strict supersolution on $(0,r_\eps)$ (see Lemma~\ref{lem:sub-super}), the lemma is proved. 
\end{proof}

\begin{lem}[Not above $\lambda^+$ as $r\to 0$]\label{lem:not-above-lambda-plus}
Assume $c\geq 2$. Take $h=h_\alpha$ a solution provided by Lemma  \ref{lem:other-large}. Then, for any $\varepsilon>0$, there is $r_\varepsilon\in(0,1)$ such that
$$
 h(r)\leq  (\lambda^++\varepsilon)^2r^2(1-r)^2, \quad \forall r\in(0,r_\varepsilon).
$$
\end{lem}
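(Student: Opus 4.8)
The plan is to bootstrap an upper bound on $\sqrt h$ as $r\to 0$, in the spirit of the proof of Lemma~\ref{lem:small-speeds}, but now using $c\ge 2$ so that the auxiliary sequence converges rather than turning negative.

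Since $h=h_\alpha$ is positive on $(0,1)$ by Lemma~\ref{lem:other-large}, on that interval $\sqrt h$ is $C^1$ with $\frac{d}{dr}\sqrt h=\frac{c}{1-r}-\frac{r}{\sqrt h}$; moreover $h(0)=0$ and, by \eqref{vers-zero-part-1}, $\sqrt{h(r)}\ge \frac{r(1-r)}{c}$ for small $r>0$, so $s\mapsto s/\sqrt{h(s)}$ is integrable at $0$ and we may integrate down to $0$ to get $\sqrt{h(r)}=-c\log(1-r)-\int_0^r \frac{s}{\sqrt{h(s)}}\,\dd s$ for small $r>0$. Fixing $\varepsilon>0$ and a small $\eta>0$, the chord inequality \eqref{eq:corde} gives $-c\log(1-r)\le c(1+\eta)r$ on some $(0,r_\eta)$, hence $\sqrt{h(r)}\le c(1+\eta)r-\int_0^r \frac{s}{\sqrt{h(s)}}\,\dd s$ there.

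Then I would bootstrap exactly as in Lemma~\ref{lem:small-speeds}: feeding $\sqrt{h(s)}\le M_n s$ into the integral term yields $\int_0^r \frac{s}{\sqrt{h(s)}}\,\dd s\ge r/M_n$ and hence $\sqrt{h(r)}\le M_{n+1}r$ on $(0,r_\eta)$, where $M_{n+1}=c(1+\eta)-1/M_n$ and $M_0=c(1+\eta)$. Since $c(1+\eta)>2$, the map $x\mapsto c(1+\eta)-1/x$ is increasing on $(0,\infty)$ with two positive fixed points, the larger being $x_+^{(\eta)}=\tfrac12\big(c(1+\eta)+\sqrt{c^2(1+\eta)^2-4}\big)\in(0,M_0)$; an elementary monotonicity argument then gives $M_n\downarrow x_+^{(\eta)}>0$, so the bootstrap never breaks down and $\sqrt{h(r)}\le M_n r$ holds on $(0,r_\eta)$ for every $n$. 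Choosing first $\eta$ small enough that $x_+^{(\eta)}<\lambda^++\varepsilon/4$ (possible since $x_+^{(\eta)}\to\lambda^+$ as $\eta\to0$) and then $n$ large enough that $M_n<\lambda^++\varepsilon/2$, we obtain $\sqrt{h(r)}\le(\lambda^++\varepsilon/2)\,r$ on $(0,r_\eta)$. Finally, since $\lambda^++\varepsilon/2\le(\lambda^++\varepsilon)(1-r)$ for $r$ close to $0$, this yields $h(r)\le(\lambda^++\varepsilon)^2r^2(1-r)^2$ on a suitable $(0,r_\varepsilon)$, as claimed.

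I expect the only genuine (though mild) obstacle to be the control of the sequence $(M_n)$: one must check that it stays positive, converges, and that its limit tends to $\lambda^+$ as $\eta\to0$. This is precisely where the hypothesis $c\ge2$ enters — for $c<2$ the same sequence eventually becomes negative, which is exactly what powers the nonexistence statement of Lemma~\ref{lem:small-speeds}. A subsidiary technical point is the justification of integrating $(\sqrt h)'$ down to $r=0$, handled by the continuity of $\sqrt h$ at $0$ together with the lower bound \eqref{vers-zero-part-1}.
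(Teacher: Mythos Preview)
Your proof is correct and follows essentially the same bootstrap strategy as the paper. The only cosmetic difference is that you work with the equation for $(\sqrt h)'$ and the recursion $M_{n+1}=c(1+\eta)-1/M_n$ (mirroring Lemma~\ref{lem:small-speeds}), whereas the paper bootstraps the integral formulation of the ODE for $h$ directly via $K_{n+1}^2=(cK_n-1)/(1-r_0)^2$; both auxiliary sequences are positive, decreasing, and converge to a value that can be made arbitrarily close to $\lambda^+$.
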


\begin{proof} 
First observe that there is $r_1>0$ such that \eqref{log-dessus} provides 
\[
h(r)\leq (2c)^2r^2(1-r)^2,\quad \forall r\in (0,r_1).
\]
Second let $0<\varepsilon \ll 1$ be fixed and let $r_0\in(0,1)$ be such that the largest root of $(1-r_0)^2X^2-cX+1$ is smaller than $\lambda^++\varepsilon$. 
Let $r_\eps = \min(r_0,r_1)$. A bootstrap argument as in the previous lemma improves the first bound inductively. Namely, for any $n\in \NN$ there holds $h(r)\leq K_{n}^2r^2(1-r)^2$ for all $r\in(0,r_\varepsilon)$, where the sequence $(K_n)_{n\in \NN}$ is positive, decreasing and  given by the induction
\[
K_{n+1}^2 = \frac{cK_n-1}{(1-r_0)^2}\,,\quad K_0 = 2c.
\]
Since $K_n$ converges to a root of $(1-r_0)^2 X^2-cX+1$ (which is smaller than $\lambda^++\varepsilon$), the lemma is proved.
\end{proof}

We now consider the solutions $h$ entering the region in-between the two bell-shaped subsolutions $r\mapsto \lambda^\pm r^2(1-r)^2$.

\begin{lem}[Sticking to $\lambda^-$ as $r\to 0$]\label{lem:sticking-lambda-moins} Assume $c\geq 2$. Take $h=h_\alpha$ a solution provided by Lemma \ref{lem:small-sol} or Lemma \ref{lem:other-large}. Assume
\begin{equation}\label{un-pt-inter-cloches}
\exists r_0\in(0,1/2],\; (\lambda^-)^2 r_0^2(1-r_0)^2\leq h(r_0)\leq (\lambda^+)^2 r_0^2(1-r_0)^2.
\end{equation}
Then, for any $0<\varepsilon<\lambda^-$, there is $r_\varepsilon\in(0,1)$ such that
$$
 (\lambda^--\varepsilon)^2r^2(1-r)^2\leq h(r)\leq  (\lambda^-+\varepsilon)^2r^2(1-r)^2, \quad \forall r\in(0,r_\varepsilon).
$$
\end{lem}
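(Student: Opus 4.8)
The plan is to obtain the lower bound directly from Lemma~\ref{lem:not-below-lambda-moins}, which already gives $h(r)\ge(\lambda^--\varepsilon)^2r^2(1-r)^2$ near $0$, and to spend all the effort on the upper bound $h(r)\le(\lambda^-+\varepsilon)^2r^2(1-r)^2$. For this I will work with
\[
\psi(r):=\frac{\sqrt{h(r)}}{r(1-r)},\qquad 0<r<1,
\]
which is positive and $C^1$ because $h>0$ on $(0,1)$ (Lemma~\ref{lem:small-sol}/Lemma~\ref{lem:other-large}). Plugging $h=\psi^2r^2(1-r)^2$ into \eqref{h-Cauchy} and using $h'=2r(c\psi-1)$ (valid since $h>0$) yields, for $0<r<\tfrac12$,
\begin{equation*}
r(1-r)^2\,\psi(r)\,\psi'(r)=-\bigl[(1-r)(1-2r)\,\psi(r)^2-c\,\psi(r)+1\bigr].
\end{equation*}
Hence $\psi'(r)$ has the sign of $-[q(r)\psi(r)^2-c\psi(r)+1]$, where $q(r):=(1-r)(1-2r)\in(0,1)$ for $r\in(0,\tfrac12)$; equivalently $\psi'(r)>0$ exactly when $\psi(r)$ lies strictly between the two roots $X_-(r)<X_+(r)$ of $q(r)X^2-cX+1=0$.

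\emph{A first consequence.} Since $(\lambda^+)^2-c\lambda^++1=0$, one has $q(r)(\lambda^+)^2-c\lambda^++1=(\lambda^+)^2(q(r)-1)<0$ for $r\in(0,\tfrac12)$, so at any point where $\psi=\lambda^+$ we have $\psi'>0$: the graph of $\psi$ can cross the level $\lambda^+$ only upward (in the direction of increasing $r$). Combined with $\psi(r_0)\le\lambda^+$ (the upper inequality in \eqref{un-pt-inter-cloches}), this forces $\psi(r)\le\lambda^+$ for all $r\in(0,r_0]$. If $c=2$ this is already the claimed upper bound since then $\lambda^+=\lambda^-$; so from now on $c>2$.

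\emph{Descent to $\lambda^-$.} One checks $X_\pm(r)\to\lambda^\pm$ as $r\to0$, with $X_-'(0)<0$ and $X_+'(0)>0$, so on a small interval $(0,\bar r)$ one has $X_-(r)<\lambda^-<\lambda^+<X_+(r)$ with $X_-$ strictly decreasing. Because $\psi\le\lambda^+<X_+(r)$ on $(0,\bar r)$, any critical point $\rho\in(0,\bar r)$ of $\psi$ satisfies $\psi(\rho)=X_-(\rho)$, and then $(\psi-X_-)'(\rho)=-X_-'(\rho)>0$, so $\rho$ is a strict local minimum of $\psi$. Every critical point of $\psi$ in $(0,\bar r)$ being thus a strict local minimum, there can be at most one of them (two would force an interior local maximum of $\psi$, itself a critical point, a contradiction). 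Hence $\psi$ is monotone on some $(0,\rho^*)$ and $L:=\lim_{r\to0}\psi(r)$ exists, with $L\ge\lambda^-$ by Lemma~\ref{lem:not-below-lambda-moins} and $L\le\lambda^+$ by the previous paragraph. To identify $L$: if $\psi$ is nonincreasing near $0$ then $\psi'\le0$, which (using again $\psi<X_+(r)$) gives $\psi(r)\le X_-(r)<\lambda^-$ near $0$, so $L\le\lambda^-$ and we are done; if $\psi$ is nondecreasing near $0$ then $\psi(r)\ge L$, the value $L=\lambda^+$ is impossible because it would give $\psi\equiv\lambda^+$, i.e.\ $h\equiv(\lambda^+)^2r^2(1-r)^2$, which is a strict subsolution and not a solution (Lemma~\ref{lem:sub-super}(b)), and a value $L\in(\lambda^-,\lambda^+)$ is impossible because then $-[q(r)\psi^2-c\psi+1]$ stays bounded below by a positive constant near $0$, forcing $\psi'(r)\gtrsim 1/r$ and hence $\psi\to+\infty$ as $r\to0$, absurd. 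Thus $L=\lambda^-$, i.e.\ $h(r)/(r^2(1-r)^2)=\psi(r)^2\to(\lambda^-)^2$, which gives the two-sided bound for every $\varepsilon>0$.

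\emph{Main obstacle.} The delicate point is exactly this descent from $\lambda^+$ down to $\lambda^-$: the naive bootstrap on the bound $\psi\le\lambda^+$ does not decrease the constant, since $\lambda^+$ is itself a fixed point of the associated iteration $K\mapsto\sqrt{cK-1}$; one genuinely needs the phase-plane information that $\lambda^-$ is the attracting value of $\psi$ as $r\to0$, which is precisely what the sign of $\psi'$ together with the monotonicity of $r\mapsto X_-(r)$ encodes. (The case $c=2$ is degenerate in the formula for $X_-'$, but, as noted, it is also trivial.)
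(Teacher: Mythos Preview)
Your proof is correct and reaches the same conclusion as the paper, but by a genuinely different route. The paper keeps working with $h$ itself: it first traps $h$ below some bell $\lambda^2 r^2(1-r)^2$ with $\lambda<\lambda^+$ (using that these bells are strict subsolutions), then assumes by contradiction a lower bound $h(r)\ge(\varepsilon\lambda^++(1-\varepsilon)\lambda^-)^2r^2(1-r)^2$ and bootstraps it through the integral formulation $h(r)=-r^2+2c\int_0^r\sqrt{h}/(1-s)\,ds$, producing a sequence $\varepsilon_n\nearrow 1$ that eventually contradicts the upper trap. You instead pass to $\psi=\sqrt{h}/(r(1-r))$, derive the scalar ODE $r(1-r)^2\psi\psi'=-[q(r)\psi^2-c\psi+1]$, and run a genuine phase-plane argument: $\psi$ is trapped below $\lambda^+$, every critical point in $(0,\bar r)$ is a strict local minimum (so there is at most one), hence $\psi$ is monotone near $0$ and has a limit $L\in[\lambda^-,\lambda^+]$, and the values $L>\lambda^-$ are excluded by a blow-up of $\psi'$. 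Your argument in fact yields the slightly sharper statement $\sqrt{h(r)}/(r(1-r))\to\lambda^-$, from which the two-sided $\varepsilon$-bound follows immediately; the paper's bootstrap only gives the $\varepsilon$-bound directly. Conversely, the paper's approach stays entirely within the sub/supersolution framework already set up in Lemma~\ref{lem:sub-super}, so it requires no new change of variables.

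Two small slips to fix. First, your exclusion of $L=\lambda^+$ is stated awkwardly: since $\psi$ is \emph{strictly} increasing on $(0,\rho^*)$ (no critical points there) and bounded above by $\lambda^+$, the limit $L=\lambda^+$ would force $\psi(r)>\lambda^+$ for $r>0$ small, which is already a contradiction; there is no need to invoke ``$\psi\equiv\lambda^+$''. Second, in the case $L\in(\lambda^-,\lambda^+)$ the bound $\psi'(r)\gtrsim 1/r$ gives, upon integrating from $r$ to a fixed $r_1$, that $\psi(r)\le\psi(r_1)-C\ln(r_1/r)\to-\infty$ as $r\to0$, not $+\infty$; the absurdity is the same (it contradicts $\psi>0$), but the sign should be corrected.
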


\begin{proof} Let $0<\varepsilon \ll 1$ be fixed.  The lower bound comes from Lemma \ref{lem:not-below-lambda-moins}. Next, we recall that for any $\lambda \in[\lambda^-,\lambda^+]$, $r\mapsto \lambda^2 r^2(1-r)^2$ is a strict subsolution on $(0,1)$. Hence if $h(r_0)=(\lambda^-)^2r_0^2(1-r_0)^2$ (which is necessarily the case for $c=2$ due to \eqref{un-pt-inter-cloches}, since in this case $\lambda^+=\lambda^-$) then $h(r)\leq (\lambda ^-)^2r^2(1-r)^2$ on $(0,r_0)$ and we are already done. On the other hand, if $(\lambda^-)^2 r_0^2(1-r_0)^2 < h(r_0)\leq (\lambda^+)^2 r_0^2(1-r_0)^2$, then
\begin{equation}
    \label{a-contredire}
\exists \lambda\in(\lambda^-,\lambda^+),\quad h(r)\leq \lambda ^2 r^2(1-r)^2, \quad \forall r\in(0,r_0).
\end{equation}

Assume (by contradiction) that there is $r_\ep>0$ such that 
\[
h(r)\geq (\ep \lambda^+  + (1-\ep)\lambda^-)^2r^2(1-r)^2,\quad \forall r\in (0,r_\ep).
\]
Using this into the  equation one finds, for $r\in(0,r_\ep)$,
\begin{eqnarray*}
h(r) &=& -r^2 + 2c\int_0^r\frac{\sqrt{h(s)}}{1-s}\dd s\\
&\geq &\left(c(\ep\lambda^+  + (1-\ep)\lambda^-)-1\right)r^2\\
&=&\left(\ep(\lambda^+)^2 + (1-\ep)(\lambda^-)^2\right)r^2\\
&\geq &\left(\ep(\lambda^+)^2 + (1-\ep)(\lambda^-)^2\right)r^2(1-r)^2,
\end{eqnarray*}
a bound which is strictly better than the initial bound (by convexity). Iteratively one finds
\[
h(r)\geq \left(\lambda^+ \ep_n + (1-\ep_n)\lambda^-\right)^2r^2(1-r)^2,\quad \forall r\in(0,r_\ep),\ \forall n\in\NN,
\]
with
\[\ep_0=\ep, \quad 
\ep_{n+1}(\lambda^+-\lambda^-) + \lambda^- = \sqrt{\ep_n(\lambda^+)^2 + (1-\ep_n)(\lambda^-)^2}.
\]
The sequence $(\ep _n)$ is increasing and, in view of \eqref{a-contredire}, bounded by 1, so it converges to some $\ep\leq \ell \leq 1$. Letting $n\to+\infty$ into the above recursive equation, we see that 
\[
(\ell\lambda^++(1-\ell)\lambda^-)^2 = \ell (\lambda^+)^2 + (1-\ell) (\lambda^-)^2,
\]
which (recall $\lambda^-\neq \lambda ^+$ since  $c>2$), by convexity, enforces $\ell=1$, which contradicts \eqref{a-contredire}.

As a result, there is a sequence $r_n\to 0$ such that
$$
h(r_n)<\left(\lambda^-+\ep(\lambda^+-\lambda^-)\right)^2 r_n^2(1-r_n)^2,
$$
and the conclusion follows from the fact that 
 $r\mapsto \left(\lambda^-+\ep(\lambda^+-\lambda^-)\right)^2 r^2(1-r)^2$ is a strict subsolution on $(0,1)$.
\end{proof}

Combining the above, we get the picture described in the following lemma. In particular, note that in this lemma the large solution $H$ is a solution of type $(c)$ and the small solution $h_0$ is a solution of type $(a)$.

\begin{lem}[Behavior of $h_\alpha$ as $r\to 0$]\label{lem:behavior-zero} Assume $c\geq 2$. Take $h=h_\alpha$ a solution provided by Lemma \ref{lem:small-sol} or Lemma  \ref{lem:other-large}. Then there exist
$$\frac{(\lambda^+)^2}{16} < \alpha_{switch}^- \leq \alpha_{switch}^+ \leq \alpha_{max},
$$
with  $\alpha_{switch}^- < \alpha_{switch}^+$ if  $c\in (2, \frac{3\sqrt{2}}{2})$, such that
\begin{enumerate}
\item[(a)]  If $h_0(\frac12)\leq \alpha < \alpha_{switch}^-$  then the solution sticks to $\lambda^-$ from below in the sense that, for any $0<\ep\ll 1$, there exists $r_\varepsilon\in(0,1)$ such that 
\begin{equation}\label{stick-to-lambda-minus} (\lambda ^--\ep)^2r^2(1-r)^2\leq h(r)\leq (\lambda^-)^2r^2(1-r)^2, \quad \forall r\in (0,r_\ep);
\end{equation}
\item[(b)]  If $ \alpha_{switch}^-\leq \alpha < \alpha_{switch}^+$  then the solution sticks to $\lambda^-$  from above in the sense that, for any $0<\ep\ll 1$, there exists $r_\varepsilon\in(0,1)$ such that 
\begin{equation}\label{stick-to-lambda-minus-above} (\lambda ^-)^2r^2(1-r)^2< h(r)\leq (\lambda^-+\varepsilon)^2r^2(1-r)^2, \quad \forall r\in (0,r_\ep);
\end{equation}
\item[(c)] If  $ \alpha_{switch}^+ \leq \alpha \leq \alpha_{max}$, then the solution sticks to $\lambda^+$ in the sense that, for any $0<\ep\ll 1$, there exists $r_\varepsilon\in(0,1)$ such that 
\[ (\lambda ^+)^2r^2(1-r)^2< h(r)\leq (\lambda^++\varepsilon)^2r^2(1-r)^2, \quad \forall r\in (0,r_\ep).
\]
\end{enumerate}
\end{lem}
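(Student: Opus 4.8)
\emph{Strategy.} The plan is to show that the type of the shooting solution $h_\alpha$ is monotone in $\alpha$, the two thresholds being the first value of $\alpha$ at which the trajectory stops crossing the bells $B^-(r):=(\lambda^-)^2r^2(1-r)^2$ and $B^+(r):=(\lambda^+)^2r^2(1-r)^2$. The basic tools are the one-sided crossing property of strict sub/supersolutions (a solution of the ODE meets a strict subsolution at most once, and only from below as $r$ grows, and dually for supersolutions), which applies to $B^\pm$ since these are strict subsolutions on $(0,1)$ by Lemma~\ref{lem:sub-super}(b); the two-sided trapping at $r=0$ from Lemmas~\ref{lem:not-below-lambda-moins}, \ref{lem:not-above-lambda-plus} and \ref{lem:sticking-lambda-moins}; and the monotone ordering $h_{\alpha_1}<h_{\alpha_2}$ on $(0,1)$ for $h_0(1/2)\le\alpha_1<\alpha_2\le\alpha_{max}$, which holds by Cauchy-Lipschitz since all these solutions are positive on $(0,1)$ and hence cannot cross.

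\emph{Step 1 (trichotomy).} Fix $\alpha$ and write $h=h_\alpha$. Since $B^-$ is a strict subsolution on $(0,1)$, the set $\{r\in(0,1):h(r)<B^-(r)\}$ is an initial interval $(0,\rho^*)$, possibly empty; thus $h$ lies strictly on one side of $B^-$ near $0$, and the same holds for $B^+$. If $h<B^-$ near $0$, Lemma~\ref{lem:not-below-lambda-moins} yields $(\lambda^--\varepsilon)^2r^2(1-r)^2\le h(r)<(\lambda^-)^2r^2(1-r)^2$ near $0$: this is case (a). If $h>B^-$ near $0$ and also $h>B^+$ near $0$, then Lemma~\ref{lem:not-above-lambda-plus} gives $(\lambda^+)^2r^2(1-r)^2<h(r)\le(\lambda^++\varepsilon)^2r^2(1-r)^2$ near $0$: case (c). Otherwise $B^-<h<B^+$ at all small $r$, so the hypothesis of Lemma~\ref{lem:sticking-lambda-moins} holds at any small $r_0$ and, combined with $h>B^-$ near $0$, gives $(\lambda^-)^2r^2(1-r)^2<h(r)\le(\lambda^-+\varepsilon)^2r^2(1-r)^2$ near $0$: case (b). Also \eqref{vers-zero-h0} places $h_0$ in case (a), and \eqref{H-coincee} together with Lemma~\ref{lem:not-above-lambda-plus} places $H$ in case (c).

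\emph{Step 2 (thresholds, monotonicity, endpoints).} By the crossing property, $h_\alpha>B^-$ near $0$ iff $h_\alpha>B^-$ on all of $(0,1)$, and similarly for $B^+$; hence the sets $S^-:=\{\alpha:h_\alpha>B^-\text{ on }(0,1)\}\supseteq S^+:=\{\alpha:h_\alpha>B^+\text{ on }(0,1)\}$ are, by the monotone ordering, up-sets in $[h_0(1/2),\alpha_{max}]$. Put $\alpha_{switch}^-:=\inf S^-$, $\alpha_{switch}^+:=\inf S^+$; then $\alpha_{switch}^-\le\alpha_{switch}^+$ and, by Step 1, cases (a), (b), (c) correspond to $\alpha$ in $[h_0(1/2),\alpha_{switch}^-)$, $[\alpha_{switch}^-,\alpha_{switch}^+)$, $[\alpha_{switch}^+,\alpha_{max}]$ once one knows the infima are attained. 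That $S^\pm$ are closed follows from continuous dependence of $h_\alpha$ on $\alpha$ on compact subsets of $(0,1)$, plus the fact that a solution cannot touch a strict subsolution from above. Moreover $S^+=[H(1/2),\alpha_{max}]$: by the monotone iteration in Lemma~\ref{lem:large}, $H$ is the smallest solution lying above $B^+$, so $h_\alpha>B^+$ on $(0,1)$ forces $h_\alpha\ge H$, hence $\alpha\ge H(1/2)$, while conversely $\alpha\ge H(1/2)$ gives $h_\alpha\ge H>B^+$ by \eqref{H-coincee}; therefore $\alpha_{switch}^+=H(1/2)$, which is $>(\lambda^+)^2/16$ by \eqref{H-coincee} and is $\le\alpha_{max}$. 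Finally $h_0\notin S^-$ by \eqref{vers-zero-h0}, so $\alpha_{switch}^->h_0(1/2)$.

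\emph{Step 3 (the bound $(\lambda^+)^2/16<\alpha_{switch}^-$ and the separation; main obstacle).} If $\alpha\in S^-$ then $\sqrt{h_\alpha(s)}>\lambda^-s(1-s)$ on $(0,1)$, and inserting this in $h_\alpha(r)=-r^2+2c\int_0^r\frac{\sqrt{h_\alpha(s)}}{1-s}\,ds$ together with $c\lambda^--1=(\lambda^-)^2$ gives $h_\alpha(r)>(\lambda^-)^2r^2$ on $(0,1)$, so $\alpha=h_\alpha(1/2)>(\lambda^-)^2/4$. For $c<\tfrac{3\sqrt2}{2}$ one checks elementarily that $3\lambda^->c$, i.e.\ $2\lambda^->\lambda^+$, whence $(\lambda^-)^2/4>(\lambda^+)^2/16$ and the desired bound on $\alpha_{switch}^-$; for the remaining $c$ one must rather prove that case (b) is empty, equivalently that the minimal solution above $B^-$ already lies above $B^+$, so that $\alpha_{switch}^-=\alpha_{switch}^+=H(1/2)$. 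For the separation when $c\in(2,\tfrac{3\sqrt2}{2})$, I would use the strict supersolution $r\mapsto(\lambda^-)^2r^2(1+\alpha r^{\beta})$ of Lemma~\ref{lem:sub-super}(d), which lies strictly between $B^-$ and $B^+$ near $0$: a trajectory trapped between $B^-$ and this supersolution at a small radius must be of type (b), so $S^-\supsetneq S^+$, i.e.\ $\alpha_{switch}^-<\alpha_{switch}^+$. The hard part is precisely this sharp location of the thresholds — the uniform bound $(\lambda^+)^2/16<\alpha_{switch}^-$ for all $c\ge2$ and the degeneracy of case (b) outside $(2,\tfrac{3\sqrt2}{2})$ — which relies on the fine sub/supersolutions of Lemma~\ref{lem:sub-super} and on a careful control of trajectories near the degenerate endpoint $r=0$, where comparison is most delicate; everything else is the routine interplay of the one-sided crossing property with monotone dependence on the shooting parameter.
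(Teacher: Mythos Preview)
Your overall architecture matches the paper's: define the two thresholds by whether $h_\alpha$ stays above the bells $B^\pm(r)=(\lambda^\pm)^2r^2(1-r)^2$, use the one-sided crossing property of strict subsolutions together with the monotone ordering in $\alpha$, and then read off cases (a)--(c) from Lemmas~\ref{lem:not-below-lambda-moins}, \ref{lem:not-above-lambda-plus}, \ref{lem:sticking-lambda-moins}. The paper phrases the thresholds as suprema of the ``crossing'' sets rather than infima of your up-sets $S^\pm$, but this is cosmetic. Your explicit identification $\alpha_{switch}^+=H(1/2)$ via the minimality of $H$ among solutions above $B^+$ is a pleasant sharpening that the paper does not state. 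For the separation $\alpha_{switch}^-<\alpha_{switch}^+$ when $c\in(2,\tfrac{3\sqrt2}{2})$ you invoke exactly the same device as the paper: the strict supersolution $(\lambda^-)^2r^2(1+\alpha r^\beta)$ of Lemma~\ref{lem:sub-super}(d), which sits strictly between $B^-$ and $B^+$ near $0$ and traps infinitely many type-(b) trajectories.

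The one genuine gap is in Step~3. For $c\ge\tfrac{3\sqrt2}{2}$ you assert that ``one must rather prove that case (b) is empty'' in order to obtain $(\lambda^+)^2/16<\alpha_{switch}^-$. This is neither proved nor claimed anywhere: the lemma only asserts the strict separation for $c\in(2,\tfrac{3\sqrt2}{2})$, and says nothing about case (b) being empty outside that range, so you are trying to establish a much stronger (and quite possibly false) dichotomy. In fact the paper's own argument for the lower bound is simply the observation that $h_\alpha$ with $\alpha=(\lambda^\pm)^2/16$ touches $B^\pm$ at $r=1/2$, whence by continuity in $\alpha$ one gets $\alpha_{switch}^\pm>(\lambda^\pm)^2/16$; in particular the paper only proves $\alpha_{switch}^->(\lambda^-)^2/16$, not the stronger inequality with $\lambda^+$ that appears in the displayed chain. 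So either the displayed bound is a typo for $(\lambda^-)^2/16$, in which case your Step~3 is entirely superfluous and the trivial continuity argument suffices, or the statement is genuinely claiming $(\lambda^+)^2/16<\alpha_{switch}^-$ for all $c\ge2$, in which case neither your proposal nor the paper's proof establishes it. Either way, your route through ``case (b) empty for large $c$'' should be dropped.
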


\begin{proof}
Let us define
$$
 \alpha_{switch}^\pm\coloneqq\sup\left\{\alpha \geq \frac{(\lambda^\pm)^2}{16}: \exists r_0\in (0, 1/2],~ h_\alpha(r_0)=(\lambda^\pm)^2r_0^2(1-r_0)^2\right\}.
$$
By continuity with respect to the parameter $\alpha$, we have  $\alpha_{switch}^\pm> (\lambda^\pm)^2/16$ and the supremum cannot be a maximum.
The lower bound in point $(a)$ follows from Lemma \ref{lem:not-below-lambda-moins}, and the upper bound from the fact that $r\mapsto (\lambda^-)^2r^2(1-r)^2$ is a strict subsolution on $(0,1)$, as in the proof of Lemma \ref{lem:sticking-lambda-moins}.
Point $(b)$ is a direct application of Lemma \ref{lem:sticking-lambda-moins} and the definition of $\alpha_{switch}^-$. 
Let us show that this assumption of point $(b)$ is not always empty, and more precisely that $\alpha_{switch}^-<\alpha_{switch}^+$ if $c\in(2,\frac{3\sqrt{2}}{2})$. For this, we consider $g^-(r) = (\lambda^-+\eps)^2r^2(1-r)^2$ and $g^+(r) = (\lambda^-)^2r^2(1+\alpha r^{\beta})$, for $\alpha>0$ and $(\lambda^{-})^{-2}-1<\beta<1$ which are respectively sub- and supersolutions by Lemma~\ref{lem:sub-super}. Moreover they are enclosed between the bells $r\to(\lambda^\pm)^2 r^2(1-r)^2$ and satisfy that $g^{-} > g^{+}$, in some interval $(0,r_\ep)$. Cauchy-Lipschitz theorem then allows to build infinitely many solutions $g^{-} > h > g^{+}$ which thus have the asymptotic behavior  \eqref{stick-to-lambda-minus-above}. These solutions correspond to $h_\alpha$ with $\alpha\geq\alpha_{switch}^-$ since $r\to(\lambda^-)^2r^2(1-r)^2$ is a subsolution, and $\alpha<\alpha_{switch}^+$ by definition of $\alpha_{switch}^+$. 
Finally, the lower bound in point $(c)$ also follows from the definition of $\alpha_{switch}^+$, whereas the upper bound is a consequence of Lemma \ref{lem:not-above-lambda-plus}.
\end{proof}

We now precise the behavior of $h_0$ as $r\to 1$ (which will translate later into an estimate on the tail of $1-u$ as $z\to -\infty$). 

\begin{lem}[Sticking to $c^{-1}$ as $r\to 1$]
\label{lem:towards-1} Assume $c\geq 2$. Take $h_0$ the solution provided by Lemma \ref{lem:small-sol}. Then for any $\ep>0$, there is $r_\ep\in(0,1)$ such that
$$
\frac{r^2(1-r)^2}{(c+\ep)^2}\leq h_0(r)<\frac{r^2(1-r)^2}{c^2}, \quad \forall r\in(r_\ep,1).
$$
\end{lem}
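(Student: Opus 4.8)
The plan splits into the (free) upper bound and the (real) lower bound. The upper bound $h_0(r)<\frac{r^2(1-r)^2}{c^2}$ near $r=1$ is nothing but \eqref{vers-un} of Lemma~\ref{lem:small-sol}, so the only work is the matching lower bound. For a fixed $\varepsilon>0$ I would introduce $\overline G_\varepsilon(r)\coloneqq\frac{r^2(1-r)^2}{(c+\varepsilon)^2}$ and first check, by an elementary computation in the spirit of Lemma~\ref{lem:sub-super}, that $\overline G_\varepsilon$ is a \emph{strict supersolution} of \eqref{h-Cauchy} on a left-neighbourhood of $1$. Indeed, with $\mu\coloneqq(c+\varepsilon)^{-1}$ one has $\overline G_\varepsilon'(r)-\bigl(\tfrac{2c}{1-r}\sqrt{\overline G_\varepsilon(r)}-2r\bigr)=2\mu^2 r(1-r)(1-2r)-2r(c\mu-1)$, and since $c\mu-1=-\tfrac{\varepsilon}{c+\varepsilon}<0$ is a fixed negative number while $\mu^2 r(1-r)(1-2r)\to0$ as $r\to1$, this difference is strictly positive for $r$ close to $1$; I then fix $\bar r\in(\tfrac12,1)$ so that this holds throughout $(\bar r,1)$.

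Then I would argue by contradiction, working with $\sqrt{h_0}$ (legitimate since $h_0>0$ on $(0,1)$), which satisfies $(\sqrt{h_0})'=\tfrac{c}{1-r}-\tfrac{r}{\sqrt{h_0}}$. Suppose $h_0(r_1)<\overline G_\varepsilon(r_1)$ for some $r_1\in(\bar r,1)$, and set $r_2\coloneqq\inf\{r\in(r_1,1):h_0(r)\ge\overline G_\varepsilon(r)\}$. If this set is empty, then $\sqrt{h_0}<\sqrt{\overline G_\varepsilon}=\tfrac{r(1-r)}{c+\varepsilon}$ on $(r_1,1)$, hence $(\sqrt{h_0})'(r)=\tfrac{c}{1-r}-\tfrac{r}{\sqrt{h_0(r)}}<\tfrac{c}{1-r}-\tfrac{c+\varepsilon}{1-r}=\tfrac{-\varepsilon}{1-r}$ on $(r_1,1)$; integrating from $r_1$ gives $\sqrt{h_0(r)}\le\sqrt{h_0(r_1)}+\varepsilon\log\tfrac{1-r}{1-r_1}\to-\infty$ as $r\to1$, contradicting $h_0>0$. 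If the set is nonempty, then $r_1<r_2<1$, $h_0<\overline G_\varepsilon$ on $(r_1,r_2)$ and $h_0(r_2)=\overline G_\varepsilon(r_2)$ by continuity, so $h_0'(r_2)\ge\overline G_\varepsilon'(r_2)$; dividing by $2\sqrt{h_0(r_2)}=2\sqrt{\overline G_\varepsilon(r_2)}>0$ this reads $(\sqrt{h_0})'(r_2)\ge(\sqrt{\overline G_\varepsilon})'(r_2)$, which contradicts the strict-supersolution inequality at $r_2\in(\bar r,1)$. Hence $h_0(r)\ge\overline G_\varepsilon(r)$ for all $r\in(\bar r,1)$, and combining with \eqref{vers-un} (with $r_\varepsilon$ the larger of $\bar r$ and the threshold there) concludes.

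All the computations are routine; the single point that needs genuine care — and the one I would flag as the main obstacle — is that the square root in \eqref{h-Cauchy} is not Lipschitz at $0$, so one cannot simply run a comparison principle on $(\bar r,1)$ up to the degenerate endpoint $r=1$. This is precisely why the argument must be carried out at the level of $\sqrt{h_0}$ on the region $\{h_0>0\}$, and why the contradiction is extracted from the monotone divergence $\sqrt{h_0}\to-\infty$ rather than from a naive ordering of $h_0$ and $\overline G_\varepsilon$ near $1$. Note that nothing above uses $c>2$, so the borderline case $c=2$ is covered as well.
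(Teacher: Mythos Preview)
Your proof is correct and follows the same overall architecture as the paper's: cite \eqref{vers-un} for the upper bound, verify that $\overline G_\varepsilon(r)=\frac{r^2(1-r)^2}{(c+\varepsilon)^2}$ is a strict supersolution on a left-neighbourhood of $1$, and derive a contradiction from the hypothesis that $h_0$ drops below $\overline G_\varepsilon$ somewhere in that neighbourhood. The only substantive difference lies in how the contradiction is extracted once one knows $h_0<\overline G_\varepsilon$ on an interval $(r^*,1)$. The paper uses the integral formulation $h_0(r)=(1-r^2)-2c\int_r^1\frac{\sqrt{h_0(s)}}{1-s}\,ds$ (which encodes $h_0(1)=0$), plugs in $\sqrt{h_0(s)}\le\frac{s(1-s)}{c+\varepsilon}$, and obtains $h_0(r)\ge\frac{\varepsilon}{c+\varepsilon}(1-r^2)$, a linear-in-$(1-r)$ lower bound that clashes with the quadratic upper bound \eqref{vers-un}. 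Your route via the differential inequality $(\sqrt{h_0})'<\frac{-\varepsilon}{1-r}$ and the logarithmic blow-down $\sqrt{h_0}\to-\infty$ is slightly more self-contained: it needs neither the boundary value $h_0(1)=0$ nor the upper bound \eqref{vers-un}, only positivity of $h_0$ on $(0,1)$. Both arguments are short and of comparable difficulty; your remark about avoiding a naive comparison principle at the degenerate endpoint is well placed, though note that the paper's integral trick sidesteps this issue just as effectively.
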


\begin{proof} The upper bound has already been proved in Lemma~\ref{lem:small-sol}. One can check that $r\mapsto \frac {r^2(1-r)^2}{(c+\ep)^2} $ is a strict supersolution near $r=1$, say on some $(r_\ep,1)$, so that if $\frac{r^2(1-r)^2}{(c+\ep)^2} > h_0(r)$ for some $r^*_\ep \in (r_\ep,1)$ then the same inequality holds for {\it any} $r\in(r^*_\ep,1)$. Thus let us assume (by contradiction) that
\[
 h_0(r)\leq \frac{r^2(1-r)^2}{(c+\ep)^2}, \quad \forall r\in(r^*_\ep,1).
 \]
 Using this into the integral formulation
 $$
 -h_0(r)=-(1-r^2)+2c\int _r^1\frac{\sqrt{h_0(s)}}{1-s}ds,
 $$
 we obtain $h_0(r)\geq \frac{\ep}{c+\ep}(1-r^2)\geq \frac{\ep}{c+\ep}(1-r)$ which contradicts the upper bound in \eqref{vers-un}.
\end{proof}

\subsection{Back to the wave profile $u$}\label{ss:back}

For $c \geq 2$, let us now consider one of the infinitely many $h:(0,1)\to(0,+\infty)$ solving \eqref{h-Cauchy} constructed in subsection \ref{ss:Cauchy}. Reversing the manipulations in subsection \ref{ss:reaching}, we get a travelling wave. Precisely, it follows from \eqref{der-u-moin-un} and \eqref{h-def} that 
$$
(u^{-1})'(r)=\frac{1}{u'(u^{-1}(r))}=-\frac{1}{\sqrt{h(r)}}.
$$
We may normalize the travelling wave by $u(0)=\frac 12$ to, finally, reach
\begin{equation}
    \label{w}
    w(r)\coloneqq u^{-1}(r)=\int_r^{\frac 12} \frac{ds}{\sqrt{h(s)}},\quad 0<r<1.
\end{equation}

Furthermore, the quadratic behavior of $h$ at $r\to 0$  corresponds to the exponential decay of the wave towards $0$  as $z\to +\infty$. Indeed, assume that, for some $0<\varepsilon\leq \frac 12$ and $0<a<b$, we have
$$
a^2r^2(1-r)^2\leq h(r)\leq b^2r^2(1-r)^2, \quad 0<r<\varepsilon.
$$
Then it follows from \eqref{w} that
$$
\frac{1}{b}\left(\log \frac \varepsilon{1-\ep} -\log \frac r{1-r}\right)\leq w(r)-C\leq \frac{1}{a}\left(\log \frac \varepsilon{1-\ep} -\log \frac r{1-r}\right), \quad 0<r<\varepsilon, 
$$
where $C\coloneqq\int_{\varepsilon}^{1/2}\frac{ds}{\sqrt{h(s)}}>0$. Returning to $u$ via $r=u(z)$  this is transferred to 
$$
C_1e^{-bz}\leq u(z)\leq C_2 e^{-az}, \quad z>z_0,
$$
for some large enough $z_0>0$, and some constants $C_1, C_2>0$.

As for the behavior \lq\lq on the left'', there are two possibilities. First, if 
\begin{equation}
\label{int-h-infinie}
\int_{\frac 12}^{1}\frac{ds}{\sqrt{h(s)}}=+\infty,
\end{equation}
this corresponds to a non-saturated wave (i.e. $z^*=-\infty$ in the setting of Definition \ref{defi:profile}). In view of \eqref{vers-un}, this occurs for the small solution $h_0$ of Lemma \ref{lem:small-sol}. Similarly as above, the quadratic behavior of $h$ at $r\approx 1$ informs on the exponential decay of the wave towards $1$ as $z\to -\infty$: one can check that
$$
a^2r^2(1-r)^2\leq h(r)\leq b^2r^2(1-r)^2, \quad 1-\varepsilon<r<1,
$$
is transferred to 
$$
C_1e^{bz}\leq 1-u(z)\leq C_2 e^{az}, \quad z<-z_0,
$$
for some large enough $z_0>0$, and some constants $C_1, C_2>0$. 

On the other hand, if \eqref{int-h-infinie} does not hold, this corresponds to a saturated wave reaching 1 at 
$$
-\infty<z^*=-\int_{\frac 12}^{1}\frac{ds}{\sqrt{h(s)}}<0.
$$
In view of \eqref{eq-en-1}, this occurs for the infinitely many large solutions of Lemma \ref{lem:small-sol}.
Furthermore, since these $h$'s satisfy \eqref{eq-en-1}, we have
$$
u'(z)=\frac{1}{w'(u(z))}=-\sqrt{h(u(z))}\sim c\log (1-u(z)), \quad \text{ as } z\searrow z^*,
$$
which, by integration, provides
$$
-\textrm{li} (1-u(z))\sim c(z-z^*), \quad \text{ as } z\searrow z^*,
$$
where $\textrm{li} (x)\coloneqq\int_0^x \frac{ds}{\log s}$ is the logarithmic integral function. Since $\textrm{li} (x)\sim \frac{x}{\log x}$ as $x\to 0$, this is transferred to 
\begin{equation}
\label{behavior-1-saturated-implicite}
\frac{1-u_S(z)}{-\log (1-u_S(z))} \sim c(z-z^*), \quad \text{ as } z\searrow z^*.
\end{equation}
Writing $1-u_S(z)=\frac{c(z-z^*)}{\psi(z)}$ with $\psi(z)\to 0$ as $z\to z^*$, so that $\log \psi(z)=\log (c(z-z^*))-\log(1-u_S(z))$, and then using   \eqref{behavior-1-saturated-implicite}, one reaches $\frac 1{\psi(z)}\sim -\log(z-z^*)$, and thus \eqref{behavior-1-saturated}.  Note in particular that $u'(z^*)=-\infty$ and the wave is sharp. 

One may also check that, for all $z\in(z^*,+\infty)$,
$$
u''(z)=\frac 12 h'(u(z)),
$$
so that a zero for $h'$ corresponds to a zero for $u''$.  Furthermore, if $z\in(z^*,+\infty)$ is such that $u''(z)=0$ then $u'''(z)=\frac 1 2 u'(z) h''(u(z))$, which is not zero if we further assume $u(z)\neq \frac 12$ (meaning that $h$ is not the solution tangent at the top of the bell). In other words, if $u(z)\neq \frac 12$ and $u''(z)=0$, then $z$ is an inflection point.

\medskip

\begin{proof}[Conclusion]
Based on these observations, it is straightforward to check that the many $h:(0,1)\to(0,+\infty)$ solving \eqref{h-Cauchy} constructed in subsection \ref{ss:Cauchy} provide the infinitely many travelling waves as stated in Theorem \ref{th:tw}. To be more precise, $u_{NS}$ in Theorem \ref{th:tw} $(i)$ comes from the small solution of Lemma \ref{lem:small-sol}, while the $u_S$'s in Theorem \ref{th:tw} $(ii)$  come from the large solutions of Lemma \ref{lem:other-large}. As for the estimates of the convergence rate to 0 or 1, they are provided by the analysis in subsection \ref{ss:boundary}.
\end{proof}
 
\begin{rema}
 Let us finally precise that the estimate \eqref{decay-0-saturated-heavy} in Theorem~\ref{th:tw} can be made slightly more precise, based on the conclusions of Lemma~\ref{lem:behavior-zero}. Indeed, among the strictly convex saturated travelling wave profiles satisfying \eqref{decay-0-saturated-heavy}, one has that infinitely many satisfy 
 \[
 C_{11}\,e^{-\lambda^-z} \leq u_{S}(z)\leq  C_{12}\, e^{-(\lambda^--\eps) z}, \quad \text{ as } z\to +\infty.
 \]
For small admissible velocities $c\in(2,\frac{2\sqrt{3}}{2})$, there are also infinitely many satisfying
 \[
 C_{13}\,e^{-(\lambda^-+\ep)z} \leq u_{S}(z)\leq  C_{14}\, e^{-\lambda^- z}, \quad \text{ as } z\to +\infty.
 \]
 \end{rema}

\noindent{\bf Acknowledgement.}  The three authors are very grateful to the anonymous referees whose raised issues and precise comments  have improved the quality and the readability of the paper. Andrea Natale and Maxime Herda warmly thank Gaël Beaunée and Pauline Ezanno for discussions about discrete distributed contact models which lead to the initial idea for this work. Matthieu Alfaro is grateful to the team RAPSODI (INRIA Lille) for its hospitality and nice atmosphere for work. Matthieu Alfaro is supported by  the ANR project DEEV ANR-20-CE40-0011-01. The authors acknowledge support from the LabEx CEMPI (ANR-11-LABX0007) and the French institute of Mathematics for Planet Earth (iMPT).

\bibliographystyle{siam}

\bibliography{bibli}

\end{document}